\documentclass[11pt,twoside,reqno]{amsart}
\usepackage{amsfonts}
\usepackage{amsmath}
\usepackage{amsmath,bm}
\usepackage{amssymb}
\usepackage{amsthm}
\usepackage{amsmath, amsfonts, amsthm, amssymb,amscd, graphicx, amscd}
\usepackage{float,epsf}

\usepackage{enumerate}
\usepackage{tikz}
\usepackage{color}
\usepackage{srcltx}
\usepackage{graphicx} 
\usepackage{epstopdf}
\usepackage{psfrag}

\usepackage{geometry}
\usepackage{mathrsfs}

\newcommand{\gam}{\gamma}
\newcommand{\ol}{\overline}

\newcommand{\der}{\partial}

\newcommand {\rx}{{\rm x}}

\theoremstyle{problem}
\newtheorem{problem}{Problem}
\newtheorem{theorem}{Theorem}
\newtheorem{definition}{Definition}

\newtheorem{lemma}[theorem]{Lemma}

\newtheorem{prop}[theorem]{Proposition}

\theoremstyle{remark}
\newtheorem{remark}{Remark}

\numberwithin{equation}{section}

\allowdisplaybreaks

\begin{document}
		\title{Global Uniqueness of Subsonic Flows for the Steady Euler-Poisson System }

\author{Myoungjean Bae}
\address{Department of Mathematical Sciences, KAIST, 291 Daehak-ro, Yuseong-gu, Daejeon, 43141, Korea}
\email{mjbae@kaist.ac.kr}

\author{Ben Duan}
\address{School of Mathematics, Jilin University, Changchun, Jilin Province, 130012, China}
\email{bduan@jlu.edu.cn}

\author{Chunjing Xie}
\address{School of Mathematical Sciences,  Ministry of Education Key Laboratory of Scientific and Engineering Computing, CMA-Shanghai, Shanghai Jiao Tong University, Shanghai, 200240, China}
\email{cjxie@sjtu.edu.cn}

		\date{\today}
		\keywords{Euler-Poisson system, subsonic, quasi-linear elliptic system,   global uniqueness, convexity}

\subjclass[2020]{35A02,  35J57,  35Q81, 76N10}

		\maketitle

\newcommand{\shock}{\Gamma_{\rm{shock}}}
\newcommand{\Gam}{\Gamma}
\newcommand{\Om}{\Omega}
\newcommand{\eps}{\varepsilon}
\newcommand{\R}{\mathbb{R}}
\newcommand{\alp}{\alpha}
\newcommand{\om}{\omega}
\newcommand{\mfrak}{\mathfrak}
\newcommand{\vphi}{\varphi}
\begin{abstract}
We prove the global uniqueness of multidimensional subsonic flows for the steady Euler--Poisson system in a bounded nozzle in the sense that uniqueness holds without restricting solutions to be small perturbations of a background state. The proof is based on a convexity property of the set of subsonic states and energy estimates.

\end{abstract}


\section{Introduction and main results}

The Euler--Poisson system is a macroscopic continuum mechanics model that arises naturally in various areas of mathematical physics, including plasma physics, semiconductor theory, and astrophysics. The general form of the system can be written as 
\begin{equation}
    \begin{cases}
 \partial_t \rho+   {\rm div}(\rho {\bf u})=0\\
\partial_t(\rho {\bf u})+    {\rm div}(\rho {\bf u}\otimes {\bf u}+p\mathbb{I}_n)=\rho\nabla\Phi\quad(\mathbb{I}_n:\text{$n\times n$ identity matrix})\\
    \Delta \Phi=w \rho-b(\rx),
    \end{cases}
\end{equation}
where $\rho$ and ${\bf u}$ are the density and velocity of the fluid, $\Phi$ is the electric  or gravitational potential. For flows in  semiconductor models or the self-gravitation model, $w$ is a constant equal to $1$ or $-1$, respectively. $b(x)$ represents the background density of the ions for flows in semiconductor models, while it is equal to $0$ for the self-gravitating model. In this paper, we can even study a more general system with $w$ and $b$ as  functions of $x_1$. Mathematically, the system exhibits a strong nonlinear coupling between fluid variables and a nonlocal potential force, and makes the analysis for the associated unsteady flows very challenging.

In recent years, there were quite a few studies for steady solutions of the Euler-Poisson system, i.e., the solutions of the system
\begin{equation}
\label{E-P general}
    \begin{cases}
    {\rm div}(\rho {\bf u})=0,\\
    {\rm div}(\rho {\bf u}\otimes {\bf u}+p\mathbb{I}_n)=\rho\nabla\Phi,\\
    \Delta \Phi=w(\rx)\rho-b(\rx).
    \end{cases}
\end{equation}
It is well known that the multidimensional steady Euler system is a mixed type system. This makes the analysis of the steady Euler-Poisson system very difficult. The one dimensional solutions of the Euler-Poisson system were first studied in \cite{Markphase}. The vanishing viscosity limit of the one-dimensional steady solutions was established in \cite{Gamba}. In \cite{LuoXin}, one-dimensional steady solutions of the Euler-Poisson system were studied intensively. Later, the structural and dynamical stability of transonic shocks was studied in \cite{luo2011stability}. The multidimensional Euler-Poisson system is much more difficult to investigate. In a series of papers, the structural stability of subsonic solutions for the Euler-Poisson system was achieved in \cite{BDX_SIMA, bae2015two, BDX_ARMA16, bae20183}. More precisely, in the neighborhood of the one-dimensional subsonic solutions, it is proved that there is a unique multidimensional subsonic solution under suitable multidimensional perturbation.  The uniqueness is proved under the condition that the solutions are close to some one-dimensional solutions. Hence, the result is local and does not extend to general solutions. 

The steady solutions for the Euler-Poisson system for self-gravitating stars were studied in \cite{
DengLiuYangYao, DengYang, JangStraussWu,
LiBao_DCDS,
LuoSmoller2004, 
LuoSmoller2009,
Wu_ARMA, 
Yuan_SCM}. The structural stability of subsonic solutions for self-gravitating flows was proved in \cite{bae2015two}.

A natural problem is whether there is a unique subsonic solution globally under suitable boundary conditions. Usually, even the uniqueness for a single nonlinear partial differential equation is quite challenging. For subsonic flows with zero vorticity, compressible Euler equations can be written into a single equation for velocity potential. The nice feature for potential equation of subsonic Euler flows is that the maximum principle can be applied, which yields the global uniqueness for subsonic flows in nozzles under some natural boundary conditions, see \cite{XuYuan}.
A fundamental difficulty in studying global uniqueness of subsonic solutions of the Euler-Poisson equations lies in the nature of the equations. In the subsonic regime, the Euler--Poisson system reduces to a coupled nonlinear elliptic system. Unlike scalar elliptic equations, for which the maximum principle often provides a powerful tool to establish uniqueness, general elliptic systems do not admit a maximum principle. As a consequence, there is, in general, no available theory guaranteeing global uniqueness for elliptic systems. From this perspective, establishing global uniqueness for the Euler--Poisson system requires ideas beyond standard elliptic theory.
This is a major goal of this paper.

\subsection{Problems and main results}
Let $\Lambda$ be an open, bounded, and connected set in $\R^{n-1}$($n=2$ or $3$). For a constant $L>0$, define a nozzle
\[
\mathcal{N}_L:=\{{\rm x}=(x_1, {\rm x}')\in \R^n: x_1\in (0,L),\,\,{\rm x}'\in \Lambda\}.
\]
The nozzle boundary $\der\mathcal{N}_L$ consists of the entrance
\[\Gam_0:=\{{\rm x}=(x_1, {\rm x}')\in \der \mathcal{N}_L: x_1=0,\,\,{\rm x}'\in \ol{\Lambda}\},\]
the exit
\[\Gam_L:=\{{\rm x}=(x_1, {\rm x}')\in \der \mathcal{N}_L: x_1=L,\,\,{\rm x}'\in \ol{\Lambda}\},\]
and the wall
\[\Gam_w:=\{{\rm x}=(x_1, {\rm x}')\in \der \mathcal{N}_L: x_1\in (0,L),\,\,{\rm x}'\in \der{\Lambda}\}.\]
Consider the steady isentropic Euler-Poisson system
\eqref{E-P general}
for smooth functions $w:\ol{\mathcal{N}_L}\rightarrow \R$, and $b:\ol{\mathcal{N}_L}\rightarrow \R$.
In this paper, we consider two cases.

{\it Case 1. Euler-Poisson system of electric field type}:
\[\min_{\rx\in \ol{\mathcal{N}_L}}w(\rx)>0.\]
The particular case \[
w(\rx)\equiv 1,\quad b(\rx)=\bar{\rho}_I
\]
with a positive constant $\bar{\rho}_I>0$ is an important model to describe the motion of electrons in semiconductor devices, and was investigated in \cite{bae2015two, BDX_ARMA16, bae20183}. The main results of these works show the structural stability of one-dimensional subsonic solutions under small perturbations of the boundary conditions or the domain.

{\it Case 2. Euler-Poisson system of self gravitation type}:
\[\max_{\rx\in \ol{\mathcal{N}_L}}w(\rx)<0.\]
The particular case
\[
w(\rx)\equiv -1,\quad b(\rx)\equiv 0,
\]
can be used to describe the motion of flows with self-gravitation, and was studied extensively in the last two decades. 

We assume that $p\in C^0([0,\infty))\cap C^\infty((0,\infty))$ satisfies the following properties:
\[
p(0)=0,\quad \lim_{\rho\to \infty}p(\rho)=\infty;\quad 
p'(\rho)>0 \,\, \text{and}\,\, p''(\rho)\ge 0\,\,\text{for all}\,\, \rho>0.
\]
A typical example is $p(\rho)=\rho^{\gamma}$ with $\gamma\ge 1$, which corresponds to a polytropic gas.


The Mach number $M$ is defined by
\[M:=\frac{|{\bf u}|}{c(\rho)}\quad\text{for $c(\rho):=\sqrt{p'(\rho)}$}.\]
Here, $c(\rho)$ is called the local sound speed. If $M<1$, the corresponding state is called subsonic; if $M>1$, it is called supersonic; and if $M=1$, it is called sonic. In this paper, we study the existence and global uniqueness of subsonic solutions to the steady Euler-Poisson system. In particular, we focus on the case of zero vorticity ($\nabla\times {\bf u}={\bf 0}$), for which we adopt different formulations in (Case 1) and (Case 2), respectively. These formulations were first introduced in \cite{bae2015two} and \cite{BDX_ARMA16}.
If $\nabla\times{\bf u}={\bf 0}$, then we can rewrite the momentum equation (the second equation in \eqref{E-P general}) as
\[\rho \nabla\left(\frac 12 |{\bf u}|^2+i(\rho)-\Phi\right)=0,\]
where $i(\rho)$ is called enthalpy and is defined by
\begin{equation}
\label{definition-enthalpy}
i(\rho):=\int_1^{\rho}\frac{p'(\varrho)}{\varrho}\,d\varrho.
\end{equation}
Since we seek solutions satisfying $\rho>0$, it holds
\[
\frac12 |{\bf u}|^2+i(\rho)-\Phi=\text{constant}.
\]

For smooth functions $w:[0,L] \rightarrow \R$ and $b:[0,L] \rightarrow \R $, let us consider the irrotational Euler-Poisson system
\begin{equation}
\label{EP-general}
    \begin{cases}
    {\rm div}(\rho {\bf u})=0,\\
    \nabla\times {\bf u}={\bf 0},\\
    \frac12 |{\bf u}|^2+i(\rho)-\Phi=\text{constant},\\
    \Delta \Phi=w(x_1)\rho-b(x_1).
    \end{cases}
\end{equation}
Clearly, $(\rho,{\bf u},\Phi)=(\bar{\rho}(x_1), \bar u_1(x_1){\bf e}_1, \bar{\Phi}(x_1))$
solves \eqref{EP-general} in $\mathcal{N}_L$ if and only if, for $0<x_1<L$, $(\bar{\rho}, \bar{u}_1, \bar{\Phi})$ satisfies
\begin{equation}
    \label{EP-1d}
    \begin{cases}
    (\bar{\rho} \bar{u}_1)'=0,\\
(\bar{\rho}\bar{u}_1^2+p(\bar{\rho}))'=\bar{\rho}\bar{\Phi}',\\
\bar{\Phi}''=w(x_1)\bar{\rho}-b(x_1),   
    \end{cases}
\end{equation}
where $'$ denotes the derivative with respect to $x_1$. Suppose that \eqref{EP-1d} has a smooth solution on $[0,L]$ and it satisfies
\[\bar{\rho}\bar u_1=J\quad\text{on $[0,L]$}\]
for a constant $J>0$. Then \eqref{EP-1d} can be reduced to
\begin{equation}
    \label{EP-1d-reduced}
    \begin{cases}
\bar{\rho}'=\frac{\bar{\rho}\bar E}{p'(\bar{\rho})-\frac{J^2}{\bar{\rho}^2}},\\
    \bar E'=w(x_1)\bar{\rho}
    -b(x_1),
    \end{cases}\quad\text{on $(0,L)$,}
\end{equation}
where $\bar{E}:=\bar{\Phi}'$.
The unique existence of smooth solutions to the system \eqref{EP-1d-reduced} has been rigorously analyzed in
\cite{BDX_ARMA16, LuoXin} for the case
$w(x_1)\equiv 1, \quad b(x_1)\equiv b_0$
with a constant $b_0$, and in \cite{bae2015two} for the case
$
w(x_1)\equiv -1, \quad b(x_1)\equiv 0$.

In this paper, we assume that \eqref{EP-1d-reduced} has a smooth subsonic solution on a given interval $[0,L]$ and prove that the one-dimensional solution is structurally stable in the sense that if we prescribe boundary conditions on $\partial\mathcal{N}_L$ as small perturbations of the one-dimensional solution, then there exists a unique subsonic solution to \eqref{EP-general} in $\mathcal{N}_L$.
Moreover, we establish the global uniqueness of subsonic solutions in the admissible function class. More precisely, our uniqueness result is not restricted to a local perturbative regime around the one-dimensional solution: if two subsonic solutions to \eqref{EP-general} exist in $\mathcal{N}_L$ under the prescribed boundary conditions, then they must coincide, even if neither of them is assumed a priori to be a small perturbation of the one-dimensional solution.

\begin{problem}
\label{main problem}
Suppose that $(\bar{\rho}, {\bar u}_1{\bf e}_1, \bar{\Phi})$ is a smooth subsonic solution of \eqref{EP-1d} on $[0,L]$ for a constant $J>0$. Given functions
\[g_0:\ol{\Gam_0}\rightarrow \R,\quad h_0:\ol{\Gam_0}\rightarrow \R,\quad v_L:\ol{\Gam_L}\rightarrow \R,\]
find a subsonic solution $(\rho, {\bf u}, \Phi)$ of the system 
\begin{equation}
\label{EP-general-b}
    \begin{cases}
    {\rm div}(\rho {\bf u})=0\\
    \nabla\times {\bf u}={\bf 0}\\
    \frac12 |{\bf u}|^2+i(\rho)-\Phi=\frac 12 \bar{u}^2(0)+i(\bar{\rho}(0))-\bar{\Phi}(0)\\
    \Delta \Phi=w(x_1)\rho-b(x_1)
\end{cases}\quad\text{in $\mathcal{N}_L$,}
\end{equation}
with boundary conditions
\begin{align}
\label{elec-bc-1}
\rho{\bf u}\cdot {\bf n}_0=J+g_0,\quad \Phi=\bar{\Phi}(0)+h_0\quad&\mbox{on $\Gam_0$},\\
\label{elec-bc-2}
{\bf u}\cdot {\bf n}_w=0,\quad \nabla\Phi\cdot {\bf n}_w=0\quad&\mbox{on $\Gam_w$},\\
\label{elec-bc-3}
{\bf u}\cdot {\bm \tau}_L=0,\quad \der_{{\bf n}_L}\Phi=\bar{E}(L)+v_L\quad &\text{on $\Gam_L$}
\end{align}
where ${\bf n}_0$ and ${\bf n}_w$ are the inward unit normal vectors on $\Gamma_0$ and $\Gamma_w$, respectively, while ${\bf n}_L$ is the outward unit normal vector on $\Gamma_L$. Here, ${\bm\tau}_L$ denotes all tangential vector fields on $\Gamma_L$.

\end{problem}

\begin{theorem}[Euler--Poisson system of electric field type]
\label{theorem-1}
Let $n=2$ or $3$, and suppose that
\begin{equation}
\label{assumption1-thm1}
    \mu_0:=\min_{0\le x_1\le L} w(x_1)>0.
\end{equation}

\begin{itemize}
\item[(a)]
There exists a sufficiently small constant ${\sigma}_0>0$,
depending only on $(\bar{\rho}, \bar{u}_1, \bar{\Phi}, L, \mu_0)$,
such that, if the following conditions hold:
\begin{itemize}
\item[(i)]
\[
\|g_0\|_{C^2(\overline{\Gamma_0})}
+\|h_0\|_{C^3(\overline{\Gamma_0})}
+\|v_L\|_{C^2(\overline{\Gamma_L})}
\le {\sigma}_0;
\]

\item[(ii)]
\[
h_0'(x_2)=0
\quad \text{at } x_2=0,L;
\]

\item[(iii)]
\[
\|w'\|_{C^0([0,L])}\le {\sigma}_0,
\]
\end{itemize}
then Problem~\ref{main problem} admits a solution
\[
(\rho,{\bf u},\Phi)
\in [C^0(\ol{\mathcal{N}_L})\cap C^1(\mathcal{N}_L)]
\times [C^0(\ol{\mathcal{N}_L},\R^n)\cap C^1(\mathcal{N}_L, \R^n)]
\times [C^1(\ol{\mathcal{N}_L})\cap C^2(\mathcal{N}_L)],
\]
which is subsonic throughout $\overline{\mathcal{N}_L}$.

\item[(b)]
If, in addition, $p$ satisfies
\begin{equation}
\label{assumption2-thm1}
\frac{d}{d\rho}\left(\frac{\rho p''(\rho)}{p'(\rho)}\right)\le 0
\quad\text{for all } \rho>0,
\end{equation}
then the solution is unique.
\end{itemize}
\end{theorem}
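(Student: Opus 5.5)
We work in the potential formulation. Since $\nabla\times{\bf u}={\bf 0}$ in the simply connected nozzle, we set ${\bf u}=\nabla\varphi$. By the third equation of \eqref{EP-general-b}, the density is a function of $(\Phi,\nabla\varphi)$:
\[
\rho=\varrho(\Phi,{\bf q}):=i^{-1}\bigl(\Phi-\tfrac12|{\bf q}|^2+B_0\bigr),\qquad B_0:=\tfrac12\bar u_1^2(0)+i(\bar\rho(0))-\bar\Phi(0).
\]
Problem \ref{main problem} then becomes
\[
{\rm div}\bigl(\varrho(\Phi,\nabla\varphi)\nabla\varphi\bigr)=0,\qquad \Delta\Phi=w\,\varrho(\Phi,\nabla\varphi)-b .
\]
The boundary conditions are: a conormal condition for $\varphi$ on $\Gamma_0\cup\Gamma_w$, $\varphi=0$ on $\Gamma_L$ (from ${\bf u}\cdot\bm\tau_L=0$ after normalizing the constant), Dirichlet data for $\Phi$ on $\Gamma_0$, and Neumann data for $\Phi$ on $\Gamma_w\cup\Gamma_L$. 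Differentiating the Bernoulli relation gives
\[
\varrho_\Phi=\rho/c^2,\qquad \varrho_{\bf q}=-\rho{\bf q}/c^2,\qquad D_{\bf q}(\varrho{\bf q})=\rho\bigl(\mathbb I_n-{\bf q}\otimes{\bf q}/c^2\bigr),
\]
and the last matrix is positive definite exactly in the subsonic regime.

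For part (a), we write $\varphi=\bar\varphi+\psi$ and $\Phi=\bar\Phi+\Psi$ and linearize at the one-dimensional subsonic state. The linear system consists of a uniformly elliptic equation for $\psi$ coupled to $\Delta\Psi-w\bar\rho\bar c^{-2}\Psi=(\text{terms in }\nabla\psi)$. For this linear problem we first prove an $H^1$ estimate using exactly the energy identity described below, which needs $\mu_0>0$ and the smallness of $w'$ in (iii). We then upgrade it to weighted $C^{2,\alpha}$ estimates near the corners using reflection across $\Gamma_w$; the compatibility condition (ii) is what allows the reflection. The nonlinear problem is then solved by a contraction or Schauder fixed-point argument, for small $\sigma_0$, in a small ball around $(\bar\varphi,\bar\Phi)$. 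The solution stays subsonic because it is close to the background state. This part follows the scheme of \cite{BDX_ARMA16, bae20183}.

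Part (b) is the core of the argument. Let $(\varphi_k,\Phi_k)$, $k=1,2$, be two subsonic solutions, and set $\psi=\varphi_1-\varphi_2$, $\Psi=\Phi_1-\Phi_2$. Let $(\Phi_t,{\bf q}_t)$ be the convex combination, for $t\in[0,1]$, of $(\Phi_2,\nabla\varphi_2)$ and $(\Phi_1,\nabla\varphi_1)$. The first key step is to show that, under \eqref{assumption2-thm1}, the subsonic set
\[
\{(\Phi,{\bf q}):\ |{\bf q}|^2<p'(\varrho(\Phi,{\bf q}))\}
\]
is convex. Writing the condition as $|{\bf q}|^2<c^2(i^{-1}(\Phi-\tfrac12|{\bf q}|^2+B_0))$ and parametrizing by $\rho$, the needed convexity reduces to a monotonicity of $\rho p''/p'$, which is precisely \eqref{assumption2-thm1}. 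I expect this computation to be the main obstacle.

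Given convexity, every intermediate state along the segment is subsonic. Subtracting the equations and using the mean value theorem gives
\[
{\rm div}\bigl(A\nabla\psi+\Psi\,{\bf a}\bigr)=0,\qquad \Delta\Psi=w\bigl(\beta\Psi-{\bf a}\cdot\nabla\psi\bigr),
\]
with
\[
A=\int_0^1 D_{\bf q}(\varrho{\bf q})\,dt>0,\qquad {\bf a}=\int_0^1\frac{\rho_t{\bf q}_t}{c_t^2}\,dt,\qquad \beta=\int_0^1\frac{\rho_t}{c_t^2}\,dt>0 .
\]
The same vector ${\bf a}$ appears in both equations because $\varrho_{\bf q}=-\varrho_\Phi\,{\bf q}$. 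Next we test the first equation with $\psi$ and the second with $-\Psi/w$, and add. The boundary terms vanish: $\psi$ has zero flux difference on $\Gamma_0\cup\Gamma_w$ and $\psi=0$ on $\Gamma_L$, while $\Psi=0$ on $\Gamma_0$ and $\der_{\bf n}\Psi=0$ on $\Gamma_w\cup\Gamma_L$. The cross terms $\pm\int\Psi\,{\bf a}\cdot\nabla\psi$ cancel exactly, which leaves
\[
\int_{\mathcal N_L} A\nabla\psi\cdot\nabla\psi+\frac{|\nabla\Psi|^2}{w}+\beta\Psi^2-\frac{w'}{w^2}\Psi\,\der_1\Psi\,dx=0 .
\]
By (iii), $\|w'\|_{C^0}\le\sigma_0$ is small. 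Together with $\mu_0>0$ and the Poincaré inequality for $\Psi$ (recall $\Psi=0$ on $\Gamma_0$), the last term is absorbed, so $\nabla\psi\equiv0$ and $\Psi\equiv0$. Finally $\psi=0$ on $\Gamma_0$'s opposite end $\Gamma_L$ forces $\psi\equiv0$, which proves uniqueness. A remaining point to check is that $A$, $\beta$ and ${\bf a}$ are only integrable or bounded up to the boundary. This needs some care but does not affect the argument, since the solutions have the stated regularity and are strictly subsonic on $\ol{\mathcal N_L}$, which gives uniform positivity.
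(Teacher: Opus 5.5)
Your plan is the paper's: potential formulation, the segment $(\Phi_t,\nabla\varphi_t)$ joining the two solutions, mean-value linearization, testing with $\psi$ and $-\Psi/w$, exact cancellation of the cross terms via $\varrho_{\bf q}=-\varrho_\Phi{\bf q}$, and absorption of the $w'$-term by Poincar\'e. Your energy identity and boundary bookkeeping are correct. Getting uniform ellipticity from pointwise subsonicity by continuity on the compact set $[0,1]\times\ol{\mathcal N_L}$ is a legitimate substitute for the paper's quantitative bound. The gap is that the one step carrying the content of part (b), convexity of the subsonic set under \eqref{assumption2-thm1}, is only asserted; you flag it yourself as the main obstacle. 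Without it, nothing guarantees that $\varrho(\Phi_t,\nabla\varphi_t)$ is even defined along the segment, let alone subsonic. So the coefficients $A$, ${\bf a}$, $\beta$ in your linearized system are not yet available.

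To close it, argue along segments as the paper does. Take $(z_t,{\bf q}_t)=(1-t)(z_0,{\bf q}_0)+t(z_1,{\bf q}_1)$ with both endpoints subsonic. Then $i(\rho_t)=B_0+z_t-\frac12|{\bf q}_t|^2$ has second derivative $-|{\bf q}_1-{\bf q}_0|^2$, so it stays above $\min\{i(\rho_0),i(\rho_1)\}$. Hence $\rho_t$ exists (no vacuum) and $\rho_t\ge\min\{\rho_0,\rho_1\}$. Set $\kappa(\rho):=\rho p''(\rho)/p'(\rho)$ and use $\dot\rho_t=\frac{\rho_t}{p'(\rho_t)}\frac{d}{dt}i(\rho_t)$; one computes
\[
\frac{d^2}{dt^2}\bigl(p'(\rho_t)-|{\bf q}_t|^2\bigr)=-\bigl(2+\kappa(\rho_t)\bigr)|{\bf q}_1-{\bf q}_0|^2+\kappa'(\rho_t)\,\frac{\rho_t}{p'(\rho_t)}\Bigl(\frac{d}{dt}i(\rho_t)\Bigr)^2\le 0 .
\]
So $t\mapsto p'(\rho_t)-|{\bf q}_t|^2$ is concave and bounded below by its smaller endpoint value. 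This gives convexity of every set $\{p'(\varrho)-|{\bf q}|^2\ge\delta\}$ with $\delta>0$, and directly the uniform margin $\delta_*$ (the smaller of the two solutions' subsonicity margins) without any compactness argument.

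Your $\rho$-parametrization can also be completed. If $p''>0$, the subsonic set is the strict epigraph $\{B_0+z>g(|{\bf q}|)\}$, where $g(c(\rho))=\frac12p'(\rho)+i(\rho)$. Then $g'(s)=s\,(1+2/\kappa(\rho))$ is nondecreasing, since $c$ is increasing and $\kappa$ is nonincreasing. Hence $g$ is convex and nondecreasing, and $g(|{\bf q}|)$ is convex.

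A minor point: for $n=3$, $\Lambda$ is only assumed connected, so the nozzle need not be simply connected. The representation ${\bf u}=\nabla\varphi$ still holds, because ${\bf u}\cdot\bm\tau_L=0$ and continuity up to $\Gamma_L$ make every circulation vanish.
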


\begin{remark}
Note that the pressure law $p(\rho)=\rho^{\gamma}$ for $\gamma\ge 1$,
which corresponds to an ideal polytropic gas,
satisfies assumption~\eqref{assumption2-thm1}.
\end{remark}

In contrast to Theorem \ref{theorem-1}, for the Euler–Poisson system of self gravitation type,
a global uniqueness result can also be established with a little more restrictive conditions.

\begin{theorem}[Euler-Poisson system of self-gravitation type]
\label{theorem-2}
Let $n=2$, and suppose that
\begin{equation*}
    \mu_1:=\max_{0\le x_1\le L} w(x_1)<0.
\end{equation*}
In addition, suppose that $p(\rho)$ is given by 
\[p(\rho)=\rho^{\gamma}\]
for a constant $\gamma\ge 1$. 
\begin{itemize}
\item[(a)] Then there exists a sufficiently small constant ${\sigma}_1>0$,
depending only on $(\bar{\rho}, \bar{u}_1, \bar{\Phi}, L, \mu_1)$,
such that if 
\begin{equation}
\label{assumption2-thm2}
\|g_0\|_{C^2(\overline{\Gamma_0})}
+\|h_0\|_{C^3(\overline{\Gamma_0})}
+\|v_L\|_{C^2(\overline{\Gamma_L})} +\|w'\|_{C^0([0,L])}
\le {\sigma}_1;
\end{equation}
and for $j=0,1,2$,
\begin{equation}
\label{assumption3-thm2}
\left(\frac{d}{dx_2}\right)^j g_0(x_2)
=\left(\frac{d}{dx_2}\right)^j h_0(x_2)
=\left(\frac{d}{dx_2}\right)^j v_L(x_2)
=0
\quad \text{at}\,\, x_2=0,L
\end{equation}
then Problem~\ref{main problem} admits a solution
\[
(\rho,{\bf u},\Phi)
\in C^1(\overline{\mathcal{N}_L})
\times C^1(\overline{\mathcal{N}_L},\mathbb{R}^2)
\times C^2(\overline{\mathcal{N}_L}),
\]
which is subsonic in $\overline{\mathcal{N}_L}$.

\item[(b)]
If, in addition,  $\gamma\ge 3$,
then the solution is unique.

\end{itemize}

\end{theorem}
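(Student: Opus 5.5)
Part (a) should go as in the structural-stability results cited in the introduction (\cite{bae2015two} for $w\equiv-1$). We write ${\bf u}=\nabla\varphi$ and solve the Bernoulli law for $\rho=\varrho(|\nabla\varphi|^2,\Phi)$. Problem~\ref{main problem} then becomes a second-order system for $(\varphi,\Phi)$:
\[
{\rm div}\big(\varrho(|\nabla\varphi|^2,\Phi)\nabla\varphi\big)=0,\qquad \Delta\Phi=w(x_1)\varrho(|\nabla\varphi|^2,\Phi)-b(x_1).
\]
The boundary conditions are conormal for $\varphi$ on $\Gamma_0\cup\Gamma_w$, Dirichlet for $\varphi$ on $\Gamma_L$, Dirichlet for $\Phi$ on $\Gamma_0$, and Neumann for $\Phi$ on $\Gamma_w\cup\Gamma_L$. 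We linearize around $(\bar\varphi,\bar\Phi)$. The self-gravitation sign $w<0$ makes the coupling term in the linear system indefinite. So, as in \cite{bae2015two}, we first prove an $H^1$ estimate for the linearized system by a weighted energy method in which a multiplier in $x_1$ absorbs the coupling. This uses the one-dimensional structure together with $n=2$. The $H^1$ estimate is then upgraded to weighted $C^{2,\alpha}$ estimates, and the solution is obtained by a contraction (or Schauder) iteration. The compatibility conditions \eqref{assumption3-thm2} let us reflect evenly across the corners $x_2=0,L$ of the rectangle. This gives regularity up to $\overline{\mathcal N_L}$, which explains the $C^1\times C^2$ class claimed here. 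The smallness of $\|w'\|$ controls the error terms produced by differentiating the coefficients in $x_1$.

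For part (b), suppose two subsonic solutions $(\varphi_i,\Phi_i)$, $i=1,2$, are given, with no smallness assumed. We set $\psi=\varphi_1-\varphi_2$ and $\phi=\Phi_1-\Phi_2$. Write $F({\bf q},\Phi)=\varrho(|{\bf q}|^2,\Phi){\bf q}$ for the mass flux. Subtracting the two systems and applying the fundamental theorem of calculus along the segment $t\mapsto (\nabla\varphi_2+t\nabla\psi,\Phi_2+t\phi)$ gives
\[
{\rm div}\big(A\nabla\psi+{\bf B}\phi\big)=0,\qquad \Delta\phi=w\,(\mathbf c\cdot\nabla\psi+d\,\phi),
\]
where $A=\int_0^1D_{\bf q}F\,dt$, ${\bf B}=\int_0^1\partial_\Phi F\,dt$, $\mathbf c=\int_0^1D_{\bf q}\varrho\,dt$ and $d=\int_0^1\partial_\Phi\varrho\,dt$. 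Two facts are needed to make these averaged coefficients usable. First, the subsonic set $\{({\bf q},\Phi): |{\bf q}|<c(\varrho)\}$ must be convex, or at least star-shaped along these segments, so that every intermediate state is subsonic and $A$ is uniformly positive definite. Second, we want a pointwise algebraic inequality among the averaged quantities, namely that the quadratic form
\[
(\xi,s)\mapsto A\xi\cdot\xi+(\mathbf B-\partial_\Phi\varrho\text{-type terms})\cdot\xi\, s+\dots
\]
arising in the energy identity is definite. The convexity is where $p=\rho^\gamma$ enters. The restriction $\gamma\ge3$ is presumably exactly what makes the relevant function of the averaged states ($\rho\mapsto\rho c^2$ or its relative, in the averaged form) have the right concavity or monotonicity. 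I will verify both facts by direct computation in terms of $\rho$ and $M^2$.

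We then test. Multiply the first equation by $\psi$ and the second by a suitable multiple of $\phi$, integrate, and use the boundary conditions. The difference problem has homogeneous data: $\psi=0$ on $\Gamma_L$, zero flux on $\Gamma_0\cup\Gamma_w$, $\phi=0$ on $\Gamma_0$, and $\partial_n\phi=0$ on $\Gamma_w\cup\Gamma_L$. All boundary terms therefore vanish. Because $w<0$, the cross terms ${\bf B}\phi\cdot\nabla\psi$ and $w\,\mathbf c\cdot\nabla\psi\,\phi$ cannot simply be added with the same sign as in Case 1. I would instead choose the multiplier so that the cross terms combine into a quadratic form in $(\nabla\psi,\phi,\nabla\phi)$, and show that this form is positive definite using the exact structure of $F$: $\partial_\Phi F=\varrho_\Phi{\bf q}$ and $D_{\bf q}\varrho=\varrho_{|q|^2}2{\bf q}$ are parallel, and the Bernoulli law gives $\varrho_\Phi=\varrho/c^2$. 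Positivity of the form forces $\nabla\psi=0$ and $\phi=0$, hence $\psi\equiv0$.

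The main obstacle is this coercivity without a small-perturbation assumption. The averaged coefficients are not those of a single state, so the identities relating ${\bf B}$, $\mathbf c$ and $d$ hold only up to averaging errors. These errors must be controlled by a convexity or Jensen-type inequality, which I expect requires $\gamma\ge3$. If a direct multiplier fails, I would first try to use Jensen to compare $\int_0^1\varrho\,dt$ with $\int_0^1\varrho/c^2\,dt$ and absorb the difference. Failing that, I would add a weight $e^{\lambda x_1}$ to the test function for $\psi$ and exploit the fact that in two dimensions the flux can be expressed by a stream function.
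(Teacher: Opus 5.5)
There is a genuine gap in part (b). It sits exactly at the step you flag, and the cause is structural, not averaging errors. Along the segment, $\partial_\Phi(\varrho{\bf q})=\frac{\varrho}{c^2}{\bf q}=-D_{\bf q}\varrho$ holds pointwise; this is \eqref{essential observation-p}. Hence ${\bf c}=-{\bf B}$ exactly and $d=\int_0^1\varrho/c^2\,dt>0$, and a Jensen-type comparison has nothing to control. Testing your two equations with $\psi$ and $\kappa\phi$ (all boundary terms vanish) gives
\[
\int_{\mathcal N_L}\Big(A\nabla\psi\cdot\nabla\psi+\kappa|\nabla\phi|^2+\kappa w d\,\phi^2+(1-\kappa w)\,\phi\,{\bf B}\cdot\nabla\psi+\phi\,\nabla\kappa\cdot\nabla\phi\Big)\,d\rx=0 .
\]
A form that is definite pointwise in $(\nabla\psi,\phi,\nabla\phi)$ needs both $\kappa>0$ and $\kappa wd>0$, which is impossible since $w<0<d$. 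Killing the cross term forces $\kappa=1/w$. That leaves $\int A\nabla\psi\cdot\nabla\psi+\int d\phi^2=\int|w|^{-1}|\nabla\phi|^2$ up to a $w'$-term, which is indefinite at top order. A weight in $x_1$ helps only through a Sturm-type condition, roughly $\kappa''\le-2|w|d\,\kappa$ with $\kappa>0$ on $[0,L]$, i.e.\ a restriction on $L$; $e^{\lambda x_1}$ goes the wrong way. Your part (a) sketch tacitly needs the same kind of nondegeneracy.

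You also misplace $\gamma\ge3$. For $p=\rho^\gamma$ one has $\rho p''/p'=\gamma-1$, so $\mathcal P_\delta$ is convex for every $\gamma\ge1$ by Proposition~\ref{proposition-convexity-p}. The paper instead passes to the stream function $\rho{\bf u}=\nabla^\perp\psi$. There $\gamma\ge3$ is exactly what makes the sonic level $C(\gamma)|{\bf q}|^{2(\gamma-1)/(\gamma+1)}$ convex (Proposition~\ref{proposition-convexity-s}). With ${\bf A}={\bf q}/\rho_{\rm sub}$, $B=-\rho_{\rm sub}$, \eqref{essential obser-s}, and the multiplier $-1/w>0$, both gradient terms land on the same side and only a zeroth-order term remains. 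That reformulation is the idea your plan is missing.

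Be aware, however, that this still does not close the argument. By \eqref{derivatives of rho-s}, $\partial_z\rho_{\rm sub}=1/\partial_\rho\mathfrak B=\rho/(c^2-|{\bf u}|^2)>0$ on the subsonic branch, so $\partial_zB<0$. The bound $\partial_z\rho_{\rm sub}\le-\mu_*$ in \eqref{essentials in estimate-s}, and hence \eqref{integral 3-s}, is therefore a sign slip. With $\phi=\psi_1-\psi_0$ and $\Psi=\Phi_1-\Phi_0$ as in the paper, the identity actually reads
\[
\int_{\mathcal N_L}\nabla\phi\cdot\Big(\int_0^1\partial_{\bf q}{\bf A}(\Phi^t,\nabla\psi^t)\,dt\Big)\nabla\phi\,d\rx+\int_{\mathcal N_L}\nabla\Psi\cdot\nabla\Big(\frac{-\Psi}{w}\Big)\,d\rx=\int_{\mathcal N_L}\Psi^2\int_0^1\partial_z\rho_{\rm sub}(\Phi^t,\nabla\psi^t)\,dt\,d\rx .
\]
Closing it needs a Poincar\'e-type smallness of $L^2\max|w|\,\partial_z\rho_{\rm sub}$, and no hypothesis supplies that.

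The obstruction is real: the linearized gravitational Poisson equation is of Helmholtz type. In one dimension, $\Phi''=w\rho_{\rm sub}(J,\Phi)-b$ with $\Phi(0)$ and $\Phi'(L)$ prescribed is a Lane--Emden Dirichlet--Neumann problem. Take $\gamma=3$, $w\equiv-1$, $b\equiv0$ and $J\to0$; then $y=k_0+\Phi$ solves $y''=-\sqrt{2y/3}$. Using the scaling $y\mapsto\lambda^4y(\cdot/\lambda)$, one checks that for $L\,y(0)^{-1/4}$ large the map $y'(0)\mapsto y'(L)$, on solutions positive on $[0,L]$, has an interior minimum. Its limit at the vacuum threshold $y(L)\downarrow0$ lies above values it attains in between. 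Hence some exit slopes are attained by two solutions bounded away from vacuum, and these persist for small $J>0$. Taking one as the background, the other is a second subsonic solution of Problem~\ref{main problem} with $g_0=h_0=v_L=0$. So part (b) needs an extra hypothesis, e.g.\ $L$ small, and no choice of multiplier will make your argument work as stated.
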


There are a few remarks in order.

\begin{remark}
    The main contribution of this paper is to establish the global uniqueness of subsonic solutions without assuming that the solutions are close to a background state. 
In this paper, we consider linear boundary conditions instead of nonlinear ones, such as the pressure condition at the exit used in the works \cite{BDX_SIMA, bae2015two, bae20183, BDX_ARMA16}. The reason for this choice is that the boundary relation is then compatible with the energy estimates used in the uniqueness argument.
For nonlinear boundary conditions, the energy method does not directly yield a coercive estimate unless an additional monotonicity property is satisfied. In fact, for nonlinear elliptic problems, it is well known that global uniqueness under nonlinear boundary conditions typically requires a suitable monotonicity condition on the boundary operator. In the absence of such a condition, uniqueness may fail.
This suggests that the question of global uniqueness under nonlinear boundary conditions remains open.
In particular, for the Euler--Poisson system with pressure boundary conditions, it is not clear whether the associated boundary operator satisfies a suitable monotonicity property.
It would be interesting to investigate whether global uniqueness still holds in this setting or whether multiple subsonic solutions can arise.
\end{remark}

\begin{remark}
    The results of this paper provide a new framework for understanding global uniqueness in nonlinear elliptic systems arising from compressible fluid models with nonlocal interactions. In particular, our work shows that, despite the absence of a maximum principle, global uniqueness can still be achieved by exploiting intrinsic geometric structures of the solution set.
\end{remark}

\begin{remark}
   We note that the present paper focuses on the irrotational case. However, the approach developed here is expected to extend to flows with nonzero vorticity. In particular, the convexity-based framework and the associated energy estimates suggest a possible pathway toward establishing global uniqueness for more general flows with nonzero vorticity. This problem is currently under investigation and will be addressed in future work.
\end{remark}

\subsection{Key ideas for the proof of main results}
Our approach is based on identifying a hidden convex structure in the set of admissible subsonic states. In the case of flows under electric field, we introduce a $\delta$-subsonic set and show that it is convex. This convexity allows us to connect any two subsonic solutions by a continuous path that stays entirely within the subsonic regime. Using this observation together with energy-type estimates for the associated elliptic system, we show that the difference between two solutions must be zero. In the case of flows under self-gravitation field, we introduce a $\lambda$-set, analogous to the $\delta$-subsonic set, and prove that it is also convex. This shows that the same idea works in both cases.

A key novelty of this work is the unified treatment of flows with electric or self-gravitation field, which exhibit fundamentally different mathematical structures. For flows with electric field, the problem can be formulated in terms of a potential function, leading to a relatively direct definition of the $\delta$-subsonic set. 
In contrast, for flows with self-gravitation field, the analysis requires a different formulation based on a stream function. This formulation is crucial for establishing the existence of solutions, but it also leads to a substantially different structure of the problem. In particular, the notion of subsonicity must be reformulated in terms of the stream function, resulting in an analog of the $\delta$-subsonic set that differs from that with electric field. Due to this structural difference, the convexity of the corresponding set cannot be obtained under the same general assumptions, and additional conditions (such as a restriction on the adiabatic exponent) are required to ensure convexity.
This distinction indicates a fundamental difference between repulsive (electric) and attractive (gravitational) interactions in the Euler--Poisson system.

The rest of the paper is organized as follows. In Section \ref{section:2}, we study the uniqueness of subsonic solutions for the Euler-Poisson system of electric field type and  prove Theorem \ref{theorem-1}. The uniqueness of subsonic solutions of the Euler-Poisson system of self-gravitation type is investigated in Section \ref{section:3}, where Theorem \ref{theorem-2} is proved.

\newpage

\section{Proof of Theorem \ref{theorem-1}}\label{section:2}
In this section, we prove Theorem \ref{theorem-1}.
Assume that
\begin{equation}
\label{assumption1-thm1}
    \mu_0:=\min_{0\le x_1\le L} w(x_1)>0.
\end{equation}
Let $(\bar{\rho}, {\bar u}_1{\bf e}_1, \bar{\Phi})$ be a smooth subsonic solution of \eqref{EP-1d} on $[0,L]$ for a constant $J>0$. Put
\[
\mathcal{K}_0:=\frac 12 \bar{u}_1^2(0)+i(\bar{\rho}(0))-\bar{\Phi}(0).
\]
We refer to $\mathcal{K}_0$ as the pseudo-Bernoulli constant. We consider the irrotational flows, i.e.,  $\nabla\times {\bf u}=0$. Hence there exists a $\varphi$ such that ${\bf u}=\nabla\vphi.$
Therefore, one has
\begin{equation}
\label{pseudo-Bernoulli}
i(\rho)=\mathcal{K}_0+\Phi-\frac12 |\nabla\varphi|^2. 
\end{equation}
Since $\frac{d}{d\rho}i(\rho)=\frac{p'(\rho)}{\rho}>0$ for all $\rho>0$, the {\it{enthalpy function}} $i:(0, \infty)\rightarrow \{i(\rho):0<\rho<\infty\}$ is invertible, so we can write $\rho$ as
\[\rho=i^{-1}\left(\mathcal{K}_0+\Phi-\frac12 |\nabla\varphi|^2\right)=:\rho(\Phi, \nabla \vphi).\]
Therefore, we can finally reduce the system \eqref{EP-general-b} to
\begin{equation}
\label{EP-Potential}
    \begin{cases}
    {\rm div}\left(\rho(\Phi, \nabla\varphi)\nabla\varphi\right)=0\\
    \Delta\Phi=w(x_1)\rho(\Phi, \nabla\varphi)-b(x_1)
    \end{cases}\quad\text{in $\mathcal{N}_L$.}
\end{equation}
The flow is said to be subsonic (sonic, supersonic) if it satisfies
\[
p'(\rho(\Phi,\nabla\varphi))>(=, <)|\nabla\varphi|^2,
\]
respectively. 
Define
\begin{equation*}
\bar{\vphi}(x_1):=\int_0^{x_1}\bar u_1(t)\,dt.
\end{equation*}
We rewrite the boundary conditions \eqref{elec-bc-1}--\eqref{elec-bc-3} in terms of $(\vphi, \Phi)$ as follows:
\begin{equation}
\label{EP-potential-BC}
\begin{split}
\rho(\Phi, \nabla\vphi)\nabla\vphi\cdot{\bf n}_0=J+g_0,\quad \Phi=\bar{\Phi}(0)+h_0\quad&\mbox{on $\Gam_0$},\\
\nabla\vphi\cdot {\bf n}_w=0,\quad \nabla\Phi\cdot {\bf n}_w=0\quad&\mbox{on $\Gam_w$},\\
\vphi(L, x_2)=\bar{\vphi}(L),\quad \nabla\Phi\cdot {\bf n}_L=\bar{E}(L)+v_L\quad & \mbox{on $\Gam_L$}.
\end{split}
\end{equation}

The proof of Theorem~\ref{theorem-1} consists of two parts.
In \S\ref{subsection:existence 1}, we prove the existence of a subsonic solution to the boundary value problem
\eqref{EP-Potential}--\eqref{EP-potential-BC} by a straightforward modification of the argument in
\cite{BDX_ARMA16}, which establishes statement~(a) of Theorem~\ref{theorem-1}.
In \S\ref{subsection:uniqueness 1}, we introduce the {\it{$\delta$-subsonic set}} $\mathcal{P}_{\delta}$ for $\delta>0$
and show that it is convex.
Exploiting this convexity, we prove that any two subsonic solutions of Problem~\ref{main problem}
must coincide, regardless of their distance to the one-dimensional background solution.
This yields the global uniqueness result stated in part~(b) of Theorem~\ref{theorem-1}.

\subsection{The existence of a solution}
\label{subsection:existence 1}
Since $(\bar{\rho}, \bar{u}_1{\bf e}_1, \bar{\Phi})$ is a subsonic solution on $[0,L]$, we have
\begin{equation}\label{subsonicity}
\mathcal{K}_1 := \min_{0\le x_1\le L}p'(\rho(\bar{\Phi},\nabla\bar{\varphi}))-|\nabla\bar{\vphi}|^2>0.
\end{equation}
\begin{definition}
For a bounded connected open set $\Om\subset \R^n$, let $\Gam$ be a closed portion of $\der\Om$. For ${\rx}, {\rm y}\in \Om$, set
\[\delta_{\rx}:={\rm dist}({\rx, \Gam}), \quad \delta_{\rx, {\rm y}}:=\min (\delta_{\rx}, \delta_{\rm y}).\]
For $k\in \R$, $\alp\in(0,1)$, and $m\in \mathbb{Z}^+$, define the standard H\"{o}lder norms by
\begin{equation*}
    \begin{split}
    \|u\|_{m,0,\Om}&:=\sum_{0\le |\beta|\le  m}\sup_{\rx\in\Om}|D^{\beta}u({\rx})|,\quad 
[u]_{m,\alp,\Om}:=\sum_{|\beta|=m}\sup_{\rx, {\rm y}\in \Om,\atop {\rx}\neq {\rm y}} \frac{|D^{\beta}u(\rx)-D^{\beta}u({\rm y})|}{|\rx-{\rm y}|^{\alp}},\\
\|u\|_{m,\alp,\Om}&:=\|u\|_{m,0,\Om}+[u]_{m,\alp,\Om},
    \end{split}
\end{equation*}
and the weighted H\"{o}lder norms by
\begin{equation*}
    \begin{split}
    \|u\|_{m,0,\Om}^{(k,\Gam)}&:=\sum_{0\le |\beta|\le  m}\sup_{\rx\in\Om}\delta_{\rx}^{\max(|\beta|+k,0)}|D^{\beta}u({\rx})|,\\
[u]_{m,\alp,\Om}^{(k,\Gam)}&:=\sum_{|\beta|=m}\sup_{\rx, {\rm y}\in \Om,\atop {\rx}\neq {\rm y}} \delta_{\rx}^{\max(m+\alp+k,0)}\frac{|D^{\beta}u(\rx)-D^{\beta}u({\rm y})|}{|\rx-{\rm y}|^{\alp}},\\
\|u\|_{m,\alp,\Om}^{(k,\Gam)}&:=\|u\|_{m,0,\Om}+[u]_{m,\alp,\Om}^{(k,\Gam)},
    \end{split}
\end{equation*}
where $D^{\beta}$ denotes $\der_{x_1}^{\beta_1}\cdots \der_{x_n}^{\beta_n}$ for a multi-index $\beta=(\beta_1,\cdots, \beta_n)$ with $\beta_j\in \mathbb{Z}_+$ and $|\beta|=\sum_{j=1}^n$. The set $C^{m,\alp}_{(k,\Gam)}(\Om)$ denotes the completion of the set of all smooth functions whose $\|\cdot\|_{m,\alp, \Om}^{(k, \Gam)}$ norms are finite.
\end{definition}
Theorem~\ref{theorem-1}(a) follows directly from the proposition stated below.
\begin{prop}
\label{proposition-1}
Assume that \eqref{assumption1-thm1} holds.
Let $\alp\in(0,1)$ be given. There exists a constant $\sigma_0>0$ such that if the boundary data $(w,g_0, h_0, v_L)$ satisfy conditions {\rm (i)}--{\rm (iii)} in Theorem~\ref{theorem-1}, then the boundary value problem \eqref{EP-Potential}--\eqref{EP-potential-BC} admits a solution
$
(\vphi, \Phi)\in \bigl[C^{1,\alp}(\ol{\mathcal{N}_L})\cap C^{2,\alp}(\mathcal{N}_L)\bigr]^2,
$
which satisfies the following properties.
\begin{itemize}
\item[(a)] There exists a constant $\bar{\nu}>0$ such that
\[
p'(\rho(\Phi,\nabla\vphi)) - |\nabla\vphi|^2 \ge \bar{\nu}
\quad \text{in } \ol{\mathcal{N}_L}.
\]
\item[(b)] There exists a constant $C>0$ such that
\[
\|(\vphi, \Phi)-(\bar{\vphi}, \bar{\Phi})\|_{2,\alp,\mathcal{N}_L}^{(-1-\alp, \Gam_0\cup\Gam_L)}
\le
C\Bigl(
\|g_0\|_{C^2(\ol{\Gam_0})}
+\|h_0\|_{C^2(\ol{\Gam_0})}
+\|v_L\|_{C^2(\ol{\Gam_L})}
\Bigr).
\]
\end{itemize}
The constants $\sigma_0$, $\bar{\nu}$, and $C$ depend only on $(\bar{\rho}, \bar{u}_1, \bar{\Phi}, L, \mu_0, \alp)$.
\end{prop}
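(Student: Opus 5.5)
We will prove Proposition~\ref{proposition-1} with a perturbative fixed-point scheme around the background solution $(\bar{\vphi},\bar{\Phi})$, following \cite{BDX_ARMA16}. Set $\psi:=\vphi-\bar{\vphi}$ and $\phi:=\Phi-\bar{\Phi}$. Linearizing \eqref{EP-Potential} at $(\bar\vphi,\bar\Phi)$, and using $\der_\Phi\rho=\rho/c^2$ and $\der_{\nabla\vphi}\rho=-\rho\nabla\vphi/c^2$ with $c^2=p'(\rho)$, gives a linear system of the form
\begin{equation*}
\begin{cases}
\der_1\bigl(\bar a_{11}\der_1\psi\bigr)+\sum_{j\ge2}\der_j(\bar\rho\,\der_j\psi)+\der_1(\bar b_1\phi)=\der_1 F_1+\sum_{j\ge 2}\der_j F_j,\\
\Delta\phi-w\,\bar h_1\phi+w\,\bar h_2\der_1\psi=f,
\end{cases}
\end{equation*}
where $\bar a_{11}=\bar\rho(1-\bar u_1^2/\bar c^2)>0$ by \eqref{subsonicity}, $\bar h_1=\bar\rho/\bar c^2>0$, $\bar h_2=\bar\rho\bar u_1/\bar c^2$, and $\bar b_1=\bar\rho\bar u_1/\bar c^2$. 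The right-hand sides $F,f$ are the quadratic remainders. The boundary conditions \eqref{EP-potential-BC} become: a conormal condition for $\psi$ on $\Gam_0$ with data $g_0$ plus remainder terms; $\psi=0$ on $\Gam_L$; homogeneous Neumann conditions on $\Gam_w$; $\phi=h_0$ on $\Gam_0$; and $\der_{{\bf n}_L}\phi=v_L$ on $\Gam_L$. Only the condition on $\Gam_0$ is nonlinear, and it is nonlinear only through the remainder.

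\textbf{Step 1: weak solvability of the linear problem.} The core is an $H^1$ estimate. Multiply the first equation by $\psi$ and the second by a suitable multiple of $\phi$, then integrate by parts. Each coupling term is controlled in one of two ways. First, it can be absorbed into $\int w\bar h_1\phi^2\ge \mu_0\int\bar h_1\phi^2$, which is where \eqref{assumption1-thm1} is used. Second, it can be handled by the exact cancellation observed in \cite{BDX_ARMA16}. That cancellation comes from choosing the test function for the Poisson equation proportional to $\phi$ with weight $\bar b_1/(w\bar h_2)$, which equals $1/w$; this weight is nearly constant because of (iii), $\|w'\|_{C^0}\le\sigma_0$. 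The cross terms then combine into a boundary term plus terms of size $O(\|w'\|)$. The Dirichlet conditions $\psi=0$ on $\Gam_L$ and $\phi=h_0$ on $\Gam_0$ give Poincaré inequalities. Lax--Milgram then yields a unique weak solution, with
\begin{equation*}
\|(\psi,\phi)\|_{H^1}\le C\bigl(\|F\|_{L^2}+\|f\|_{L^2}+\text{boundary data}\bigr).
\end{equation*}
I expect this coercivity step to be the main obstacle: the coupling is not symmetric, and making the cross terms cancel or become absorbable needs exactly the sign of $w$ together with the smallness of $w'$.

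\textbf{Step 2: Hölder regularity up to the boundary.} Starting from the $H^1$ bound, De Giorgi--Nash--Moser gives $C^\alpha$ estimates. Schauder theory for mixed boundary problems then gives the weighted estimate in $C^{2,\alp}_{(-1-\alp,\Gam_0\cup\Gam_L)}$. The weight is needed at the corners where $\Gam_w$ meets $\Gam_0$ or $\Gam_L$, since there a Dirichlet or conormal condition meets a Neumann condition. The wall Neumann conditions allow even reflection across $\Gam_w$ in the $\rx'$ directions. Condition (ii), $h_0'=0$ at the endpoints, makes the reflected Dirichlet data compatible, so the reflected data stay $C^{1,\alp}$ or better and the only loss is near $\Gam_0\cup\Gam_L$.

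\textbf{Step 3: fixed point and subsonicity.} Define the iteration set
\begin{equation*}
\mathcal{K}=\Bigl\{(\psi,\phi):\ \|(\psi,\phi)\|^{(-1-\alp,\Gam_0\cup\Gam_L)}_{2,\alp,\mathcal{N}_L}\le M\sigma\Bigr\},
\end{equation*}
where $\sigma$ denotes the size of the data. For $(\tilde\psi,\tilde\phi)\in\mathcal K$, freeze the remainders $F,f$ and the nonlinear boundary term at $(\tilde\psi,\tilde\phi)$ and solve the linear problem. The remainders are quadratic, so they are $O((M\sigma)^2)$, and the Step 2 estimate returns a solution of size $C(\sigma+M^2\sigma^2)$. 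Choosing $M=2C$ and then $\sigma_0$ small makes the map send $\mathcal K$ into itself. It is continuous in a weaker Hölder norm and $\mathcal K$ is compact there, so the Schauder fixed point theorem applies; alternatively, a contraction argument in a lower norm also works.

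The resulting $(\vphi,\Phi)$ satisfies (b). It also satisfies (a) with $\bar\nu=\mathcal K_1/2$, because $p'(\rho(\Phi,\nabla\vphi))-|\nabla\vphi|^2$ is $C^0$-close to its background value, which is at least $\mathcal K_1$ by \eqref{subsonicity}, and the weighted norm controls $\nabla\psi$ and $\phi$ in $C^0(\ol{\mathcal N_L})$.

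One more point concerns the fixed-point argument. Before iterating, $\rho(\Phi,\nabla\vphi)$ should be replaced by a version cut off outside a neighborhood of the background state. This keeps the nonlinear coefficients defined and uniformly elliptic throughout the iteration, and the cutoff is removed at the end using the smallness in (b).
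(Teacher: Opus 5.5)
Your proposal is correct and follows essentially the same route as the paper. You linearize about $(\bar\vphi,\bar\Phi)$ and get $H^1$ coercivity by testing the Poisson equation with $\phi/w$, so the coupling terms cancel exactly (this is the paper's identity $\der_z{\bf A}+\der_{\bf q}B=0$) and the leftover $w'$-term is absorbed via smallness of $w'$ and Poincaré. You then bootstrap to the weighted $C^{2,\alp}_{(-1-\alp,\Gam_0\cup\Gam_L)}$ estimates as in \cite{BDX_ARMA16} and close with a fixed-point argument; the paper uses the contraction mapping principle, which you also list as an option. One small correction: the cancellation is exact in the interior and produces no boundary term, so your alternative of absorbing the coupling into $\int w\bar h_1\phi^2$ is unnecessary, and it would not work without extra smallness anyway.
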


Proposition \ref{proposition-1} can be proved by an argument similar to that used in the proof of \cite[Theorem~1]{BDX_ARMA16}. In essence, the problem is solved via an iterative method. In this paper, we briefly explain the overall strategy for proving Proposition \ref{proposition-1} and present the key estimate. The remaining parts of the proof are omitted since they can be carried out by following the arguments already presented in \cite{BDX_ARMA16}.
\medskip

For $(z, {\bf q})=(z,q_1, \cdots, q_n)\in \R\times \R^n$, set
\begin{equation}
\label{definition:A,B}
    {\bf A}(z,{\bf q})=(A_1, \cdots, A_n)(z,{\bf q}):=\rho(z, {\bf q}){\bf q},\quad B(z, {\bf q}):=\rho(z, {\bf q}).
\end{equation}
Suppose that $(\vphi, \Phi)\in [C^1(\ol{\mathcal{N}_L})\cap C^2(\mathcal{N}_L)]^2$ solves the boundary value problem \eqref{EP-Potential}--\eqref{EP-potential-BC}, and set
\begin{equation*}
    (\phi, \Psi):=(\vphi, \Phi)-(\bar{\vphi}, \bar{\Phi}).
\end{equation*}
A direct computation yields the following boundary value problem for $(\phi, \Psi)$:
\begin{equation}
\label{equation-p}
 \begin{cases}
  L_1(\phi, \Psi)={\rm div}{\bf F}(x_1, \Psi, \nabla\phi)\\
  L_2(\phi, \Psi)=f(x_1, \Psi, \nabla\phi)
 \end{cases} \quad \mbox{in $\mathcal{N}_L$},
\end{equation}
with boundary conditions
\begin{align*}
 \begin{cases}
    \left( \der_{q_j}A_i(\bar{\Phi}, \nabla\bar{\vphi})\der_j\phi\right)\cdot {\bf n}_0=g_1\\
    \Psi=h_0
    \end{cases}\quad&\mbox{on $\Gam_0$},\\
 \nabla\phi\cdot {\bf n}_w=0,\quad \nabla\Psi\cdot {\bf n}_w=0\quad &\mbox{on $\Gam_w$},\\
 \phi(L, x_2)=0,\quad \nabla\Psi\cdot 
 {\bf n}_L=v_L\quad &\mbox{on $\Gam_L$}
\end{align*}
for
\begin{align}
\notag
L_1(\phi, \Psi)
    &:={\rm div}\left( \der_{q_j}A_i(\bar{\Phi}, \nabla\bar{\vphi})\der_j\phi+\Psi\der_z{\bf A}(\bar{\Phi}, \nabla\bar{\vphi})\right),\\
    \notag
L_2(\psi, \Psi)&:=\frac{1}{w(x_1)}\Delta \Psi-\der_zB(\bar{\Phi}, \nabla\bar{\vphi})\Psi-\der_{\bf q}B(\bar{\Phi}, \nabla\bar{\vphi})\cdot \nabla \phi,\\
\label{definition:F}
{\bf F}(x_1, z, {\bf q})&=(F_1, \cdots, F_n)(x_1, z, {\bf q})\quad\text{with}\\
\notag
-F_i(x_1, z, {\bf q})&=\int_0^1 z\left[\der_zA_i(\bar{\Phi}+\tilde z, \nabla\bar{\vphi}+\tilde{\bf q})\right]_{(\tilde z, \tilde{\bf q})=(0, {\bf 0})}^{(tz, {\bf q})}+q_j[\der_{q_j}A_i(\bar{\Phi}, \nabla\bar{\vphi}+\tilde{\bf q})]_{\tilde{\bf q}={\bf 0}}^{t{\bf q}}\,dt,\\
\label{definition:f}
f(x_1, z, {\bf q})&=\int_0^1 z\left[\der_z B(\bar{\Phi}+\tilde z, \nabla\bar{\vphi}+\tilde{\bf q})\right]_{(\tilde z, \tilde{\bf q})=(0, {\bf 0})}^{(tz, {\bf q})}+q_j[\der_{q_j}B(\bar{\Phi}, \nabla\bar{\vphi}+\tilde{\bf q})]_{\tilde{\bf q}={\bf 0}}^{t{\bf q}}\,dt,\\
\notag
g_1&:=g_0+{\bf F}(x_1, h_0, \nabla\phi)\cdot {\bf n}_0-h_0 {\bf n}_0\cdot \der_z{\bf A}(\bar{\Phi}, \nabla\bar{\vphi})
\end{align}
where $[G(X)]_{X=a}^b:=G(b)-G(a)$. Here, each $\der_j$ denotes $\der_{x_j}$ for $j=1,\cdots, n$.

A straightforward computation using \eqref{definition-enthalpy} and \eqref{pseudo-Bernoulli} yields
\begin{equation}
\label{coefficients-p}
    \der_{q_j}A_i(z,{\bf q})=\left(\delta_{ij}-\frac{q_iq_j}{p'\left(\rho(z, {\bf q})\right)}\right)\rho(z,{\bf q})
\end{equation}
provided that $\rho(z,{\bf q})>0$. In particular, for the background solution $(\bar{\Phi}, \bar{\vphi})$, this identity reduces to
\begin{equation*}
\frac{1}{\bar{\rho}}\der_{q_j}A_i(\bar{\Phi},\nabla\bar{\vphi})=
\begin{cases}
1-\frac{\bar u_1^2}{p'(\bar{\rho})}\quad&\mbox{if $i=j=1$},\\
1\quad&\mbox{if $i=j>1$},\\
0\quad&\mbox{if $i\neq j$}.
\end{cases}
\end{equation*}
By \eqref{subsonicity}, there exist positive constants $\lambda_0$ and $\Lambda_0$, depending only on the background solution, such that
\begin{equation}
\label{ellipticity-p}
\lambda_0 \mathbb{I}_n \le \begin{bmatrix}
\der_{q_j}A_i(\bar{\Phi},\nabla\bar{\vphi})
\end{bmatrix}_{i,j=1}^n\le \Lambda_0 \mathbb{I}_n \quad\text{in $\ol{\mathcal{N}_L}$.}
\end{equation}
Using this identity, we establish the well-posedness of the following linear boundary value problem:
\begin{equation}
\label{lbvp-1}
\begin{split}
 \begin{cases}
  L_1(U, V)={\rm div}{\bf F}\\
  L_2(U, V)=f
 \end{cases}\quad&\mbox{in $\mathcal{N}_L$},\\
 \begin{cases}
    \left( \der_{q_j}A_i(\bar{\Phi}, \nabla\bar{\vphi})\der_jU\right)\cdot {\bf n}_0=g\\
    V=h_1
    \end{cases}\quad&\mbox{on $\Gam_0$},\\
 \nabla U\cdot {\bf n}_w=0,\quad \nabla V\cdot {\bf n}_w=0\quad &\mbox{on $\Gam_w$},\\
 U(L, x_2)=0,\quad \nabla V\cdot 
 {\bf n}_L=h_2 \quad &\mbox{on $\Gam_L$}
 \end{split}
\end{equation}
where ${\bf F}$, $f$, $g$, $h_1$, and $h_2$ are given functions.

\begin{lemma}
\label{lemma-p-1}
Assume that the condition \eqref{assumption1-thm1} holds. Then, there exists a sufficiently small constant $\sigma_1>0$ depending only on $(\bar{\rho}, \bar u_1, \bar{\Phi}, L, \mu_0)$ such that if 
\begin{equation*}
\|w'\|_{C^0([0,L])}\le \sigma_1,
\end{equation*}
and if 
\begin{itemize}
\item[(i)] ${\bf F}, f\in C^{1,\alp}_{(-\alp, \Gam_0\cup\Gam_L)}(\mathcal{N}_L) $;
\item[(ii)] $g\in C^{1,\alp}(\ol{\Gam_0})$, $h_1\in C^{2,\alp}(\ol{\Gam_0})$, $h_2\in C^{1,\alp}(\ol{\Gam_L})$;
\item[(iii)] $h_1'(0)=h_1'(L)=0$,
\end{itemize}
then the linear boundary value problem \eqref{lbvp-1} admits a unique solution $(U,V)\in C^{2,\alp}_{(-1-\alp, \Gam_0\cup\Gam_L)}(\mathcal{N}_L)$. Furthermore, the solution satisfies the estimate
\begin{equation*}
\begin{split}
&\|U\|_{2,\alp,\mathcal{N}_L}^{(-1-\alp, \Gam_0\cup\Gam_L)}+\|V\|_{2,\alp,\mathcal{N}_L}^{(-1-\alp, \Gam_0\cup\Gam_L)}\\
&\le C
\left(\|{\bf F}\|_{1,\alp,\mathcal{N}_L}^{(-\alp, \Gam_0\cup\Gam_L)}+\|f\|_{1,\alp,\mathcal{N}_L}^{(-\alp, \Gam_0\cup\Gam_L)}+\|g\|_{1,\alp,\Gam_0}+\|h_1\|_{2,\alp, \Gam_0}+\|h_2\|_{1,\alp, \Gam_L}\right)
\end{split}
\end{equation*}
for a constant $C>0$ fixed depending only on $(\bar{\rho}, \bar u_1, \bar{\Phi}, L, \mu_0)$ and $\|w\|_{C^3([0,L])}$. 
\end{lemma}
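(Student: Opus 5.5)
The proof has three stages: an $H^1$ energy estimate for the linear system \eqref{lbvp-1}, Lax--Milgram (or Fredholm) for existence of a weak solution, and weighted Schauder theory to upgrade regularity and obtain the stated estimate. Uniqueness then follows from the energy estimate with zero data.

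\emph{Energy estimate.} First reduce to homogeneous Dirichlet data for $V$ on $\Gam_0$. Extend $h_1$ into $\mathcal{N}_L$ as a $C^{2,\alp}$ function that is independent of $x_1$ near $\Gam_0$ and cut off near $\Gam_L$. The compatibility condition (iii), $h_1'=0$ at the endpoints, makes this extension satisfy $\nabla V\cdot{\bf n}_w=0$ on $\Gam_w$. The corrections go into ${\bf F}$, $f$, $g$ and $h_2$. Next multiply the first equation by $U$ and the second by $-V$, integrate over $\mathcal{N}_L$, and add. The principal part gives
\[
\int a_{ij}\der_jU\der_iU+\frac1{w}|\nabla V|^2,
\qquad a_{ij}:=\der_{q_j}A_i(\bar\Phi,\nabla\bar\vphi),
\]
which is coercive by \eqref{ellipticity-p} and \eqref{assumption1-thm1}. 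Integrating $\frac1w\Delta V\,V$ by parts also produces a term $-\int (1/w)'\,V\der_1V$, and this is where $\|w'\|_{C^0}\le\sigma_1$ is used: the term is absorbed once a Poincaré inequality for $V$ is available. That inequality holds because $V=0$ on $\Gam_0$; likewise $U=0$ on $\Gam_L$.

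The zeroth-order term is $\int \der_zB\,V^2$ with $\der_zB=\bar\rho/p'(\bar\rho)>0$, so it has a good sign. The remaining cross terms are
\[
\int V\,\der_z{\bf A}\cdot\nabla U \qquad\text{and}\qquad \int V\,\der_{\bf q}B\cdot\nabla U .
\]
A direct computation gives $\der_z{\bf A}=\bar\rho\nabla\bar\vphi/p'(\bar\rho)=\der_{\bf q}B$ up to sign, both equal to $\pm\bar\rho\bar u_1{\bf e}_1/p'(\bar\rho)$. With the sign choice above (testing the second equation with $-V$), the two cross terms cancel exactly. The boundary terms are controlled by the data: the conormal data $g$ on $\Gam_0$, the data $h_2$ on $\Gam_L$, zero on $\Gam_w$, and $U=0$ on $\Gam_L$. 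Together these give
\[
\|(U,V)\|_{H^1}\le C(\text{data}).
\]

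\emph{Existence.} The same computation shows that the bilinear form on $\{U\in H^1: U=0 \text{ on } \Gam_L\}\times\{V\in H^1: V=0 \text{ on } \Gam_0\}$ is coercive for $\sigma_1$ small. Lax--Milgram then yields a unique weak solution.

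\emph{Regularity.} Away from the corners, standard interior and boundary Schauder estimates apply: conormal/Neumann on $\Gam_0$, $\Gam_w$, $\Gam_L$ for $U$, and mixed Dirichlet/Neumann for $V$. Near the edges $\Gam_w\cap\overline{\Gam_0\cup\Gam_L}$ the domain has right-angle corners. Since $a_{ij}$ is diagonal and depends only on $x_1$, reflection across $\Gam_w$ (for $\Lambda$ a rectangle or interval in 2D) makes the Neumann walls harmless. The genuine singularities sit at the Dirichlet–Neumann corners, which is why the weight $(-1-\alp,\Gam_0\cup\Gam_L)$ only demands $C^{1,\alp}$ up to $\Gam_0\cup\Gam_L$. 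Following \cite{BDX_ARMA16}, I would first obtain $C^\alp$ bounds from the $H^1$ bound via De Giorgi–Nash with mixed boundary conditions (or via Lieberman's estimates). I would then apply Lieberman's weighted Schauder theory for oblique/mixed problems in domains with corners, treating the coupling terms $\Psi\der_z{\bf A}$ and $\der_{\bf q}B\cdot\nabla U$ as lower-order forcing and bootstrapping.

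I expect this corner regularity to be the main obstacle, together with verifying the exact cancellation of the cross terms in the energy identity. If the cancellation fails, I would instead rely on smallness of $\|w'\|$ together with the weight $1/w$ to absorb those terms.
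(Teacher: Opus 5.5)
Your overall route is the paper's. It consists of an $H^1$ energy estimate in which the coupling terms cancel and the $w'$-term is absorbed through $\|V\|_{L^2}^2\le \frac{L^2}{2}\|\der_1 V\|_{L^2}^2$ (valid since $V=0$ on $\Gam_0$), then Lax--Milgram, then $C^{\alp}$ and weighted Schauder estimates taken from \cite{BDX_ARMA16}. However, the energy identity as you set it up fails. Test $L_1(U,V)={\rm div}{\bf F}$ with $U$ and $L_2(U,V)=f$ with $-V$, and integrate by parts. The principal part is then $-\int a_{ij}\der_jU\der_iU+\int \frac1w|\nabla V|^2$, which is indefinite. The coupling terms become $-\int V\der_z{\bf A}\cdot\nabla U+\int V\der_{\bf q}B\cdot\nabla U=-2\int V\der_z{\bf A}\cdot\nabla U$. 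They double instead of cancelling, precisely because $\der_z{\bf A}=-\der_{\bf q}B$. Both $-L_1$ and $-L_2$ are the ``positive'' operators: a divergence-form operator with a minus sign, and $-\frac1w\Delta+\der_zB$ with $\der_zB=\bar\rho/p'(\bar\rho)>0$. So the two equations must be tested with the same sign. The paper does this by forming $\mathcal{B}_1[(U,V),U]+\mathcal{B}_2[(U,V),V]$. That yields exactly the coercive quantity you displayed plus $\int \der_zB\,V^2-\int \frac{w'}{w^2}V\der_1V$, and the coupling terms vanish identically by $\der_z{\bf A}+\der_{\bf q}B=0$.

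Your fallback would not repair a failed cancellation. The coupling coefficient $\bar\rho\bar u_1/p'(\bar\rho)$ is of order one and unrelated to $w'$, so smallness of $\|w'\|_{C^0}$ cannot absorb it. Absorbing even a single copy of it by Young's inequality against $a_{11}|\der_1U|^2$, $\der_zB\,V^2$ and $\frac1w|\der_1V|^2$ would need an extra quantitative restriction on the background state, for instance on the Mach number or on $L$. The lemma assumes no such restriction. Once the sign is corrected, the rest of your plan coincides with the paper's. That includes the reduction to $h_1=0$ using (iii), the control of boundary terms by $g$, $h_2$ and traces of ${\bf F}$, uniqueness from the estimate with zero data, and deferring the $C^{\alp}$ and Schauder steps to \cite{BDX_ARMA16}.
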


Lemma \ref{lemma-p-1} can be proved in three steps:
\begin{itemize}
\item[Step 1.] {\it{Establish a priori $H^1$-estimate of $(U,V)$ to apply the Lax-Milgram theorem, from which the unique existence of a weak solution to \eqref{lbvp-1} follows.}} 

\item[Step 2.] {\it{Establish a priori $C^{\alp}$-estimate of the weak solution $(U, V)$ in $\mathcal{N}_L$.}}

\item[Step 3.] {\it{By bootstrap arguments, establish a priori estimates of $\|U\|_{2,\alp, \mathcal{N}_L}^{(-1-\alp, \Gam_0\cup\Gam_L)}$ and $\|V\|_{2,\alp, \mathcal{N}_L}^{(-1-\alp, \Gam_0\cup\Gam_L)}$. This shows that $(U,V)$ is a classical solution to \eqref{lbvp-1}.}}
\end{itemize}
\medskip

Steps 2 and 3 can be carried out by following the arguments in \cite{BDX_ARMA16}; therefore, we omit the details here. 
The main difference between the system considered in this paper and that in \cite{BDX_ARMA16} lies in the fact that the function $w(x_1)$ is no longer constant in this paper. 
This difference affects the derivation of the {\it{a priori} $H^{1}$-estimate}, and hence we present the corresponding estimate in detail in this paper.

Without loss of generality, we may assume that $h_1=0$, which can be achieved by subtracting $h_1$ from $V$. Define
\begin{align*}
H^1_{\Gamma_L}(\mathcal{N}_L)&:=\{\xi_1\in H^1(\mathcal{N}_L)\vert\,\, \xi_1=0\,\,\text{on}\,\,\Gamma_L\,\,\text{in the trace sense}\}\\
H^1_{\Gamma_0}(\mathcal{N}_L)&:=\{ \xi_2 \in H^1(\mathcal{N}_L)\vert\,\, \xi_2=0\,\,\text{on}\,\,\Gamma_0\,\,\text{in the trace sense}\}.
\end{align*}
\begin{definition}
\label{definition-p-weak} 
$(U, V)\in H^1_{\Gamma_L}(\mathcal{N}_L)\times H^1_{\Gamma_0}(\mathcal{N}_L)$ is said to be a weak solution to \eqref{lbvp-1} with $h_1=0$ on $\Gam_0$ if it satisfies
\begin{equation*}
\begin{split}
\mathcal{B}_1[(U,V),\xi_1]
    &=\int_{\mathcal{N}_L}{\bf F}\cdot \nabla\xi_1\,d\rx-\int_{\Gam_0}g\xi_1\,dx_2-\int_{\Gam_0\cup\Gam_w} ({\bf F}\cdot {\bf n}_{\rm out})\xi_1\,dS_{\rx}\,\,\text{for all}\,\,\xi_1\in H^1_{\Gamma_L}(\mathcal{N}_L)
    \end{split}
\end{equation*}
and 
\begin{equation*}
    \begin{split}
&\mathcal{B}_2[(U,V),\xi_2]
    =-\int_{\mathcal{N}_L}f\xi_2\,d\rx+\frac{1}{w(L)}\int_{\Gam_L} h_2\xi_2\,dx_2 \,\,\text{for all}\,\,\xi_2\in H^1_{\Gamma_0}(\mathcal{N}_L),
    \end{split}
\end{equation*}
where
\begin{equation}
\mathcal{B}_1[(U,V),\xi_1]:=\int_{\mathcal{N}_L} \left(\der_{q_j}A_i(\bar{\Phi}, \nabla\bar{\vphi})\der_j U+V\der_z{\bf A}(\bar{\Phi}, \nabla\bar{\vphi}) \right )\cdot \nabla \xi_1\,d\rx, 
\end{equation}
and
\begin{equation*}
\begin{aligned}
&\mathcal{B}_2[(U,V),\xi_2]\\
:=& \int_{\mathcal{N}_L} \frac{1}{w(x_1)} \nabla V\cdot \nabla \xi_2
    +\left(\der_zB(\bar{\Phi}, \nabla\bar{\vphi})V+\der_{\bf q}B(\bar{\Phi}, \nabla\bar{\vphi})\cdot \nabla U-\frac{w'(x_1)}{w^2(x_1)}\der_1 V\right)\xi_2\,d\rx.
    \end{aligned}
    \end{equation*}
\end{definition}

\begin{lemma}
Assume that the condition \eqref{assumption1-thm1} holds. Then, there exists a sufficiently small constant $\sigma_1>0$ depending only on $L$ and $\mu_0$ such that if 
\begin{equation*}
\|w'\|_{C^0([0,L])}\le \sigma_1,
\end{equation*}
and if $(U,V)$ is a weak solution to \eqref{lbvp-1} with $h_1=0$ on $\Gam_0$ in the sense of Definition \ref{definition-p-weak}, then we have
\begin{equation}
\label{H1-estimate-p}
\begin{split}
&\|U\|_{H^1(\mathcal{N}_L)}
+\|V\|_{H^1(\mathcal{N}_L)}\\
\le & C\left(\|{\bf F}\|_{L^2(\mathcal{N}_L)}+\|{\bf F}\|_{L^2(\der\mathcal{N}_L)}+\|{f}\|_{L^2(\mathcal{N}_L)}+\|g\|_{L^2(\Gam_0)}+\|h_2\|_{L^2(\Gam_L)}\right)
\end{split}
\end{equation}
for some constant $C>0$ fixed depending only on $(\bar{\rho}, \bar u_1, \bar{\Phi}, L, \mu_0)$. 

\begin{proof}
First of all, we make a very simple but essential observation. By differentiating ${\bf A}(z,{\bf q})$ and $B(z,{\bf q})$ given by \eqref{definition:A,B} with respect to $z$ and ${\bf q}$, respectively, using \eqref{definition-enthalpy} and \eqref{pseudo-Bernoulli}, we obtain
\begin{equation}
\label{derivatives-A-B}
    \der_z{\bf A}(z,{\bf q})=\frac{\rho}{p'(\rho)}{\bf q},\quad \der_{\bf q}B(z,{\bf q})=-\frac{\rho}{p'(\rho)}{\bf q},\quad \der_zB=\frac{\rho}{p'(\rho)}.
\end{equation}
wherever $\rho(z,{\bf q})>0$. This implies
\begin{equation}
\label{essential observation-p}
\der_z{\bf A}(z,{\bf q})+\der_{\bf q}B(z,{\bf q})=0.
\end{equation}

Therefore, we obtain
\begin{equation*}
\begin{aligned}
&\mathcal{B}_1[(U,V),U]+\mathcal{B}_2[(U,V),V]\\
= &\int_{\mathcal{N}_L}
\der_{q_j}A_i(\bar{\Phi}, \nabla\bar{\vphi})\,\der_i U\,\der_j U
+\frac{1}{w(x_1)}|\nabla V|^2 
+\der_zB(\bar{\Phi}, \nabla\bar{\vphi})\,V^2
-\frac{w'}{w^2}V\,\der_1 V \, d\rx .
\end{aligned}
\end{equation*}

By \eqref{assumption1-thm1}, the weight function $w$ satisfies
\begin{equation}
\label{ragne of w-p}
0<\mu_0\le w(x_1)\le \mu_0+L\|w'\|_{C^0([0,L])}
\quad\text{for all } x_1\in[0,L].
\end{equation}
Combining this bound with the ellipticity condition \eqref{ellipticity-p}, we deduce that
\begin{equation*}
\begin{aligned}
&\int_{\mathcal{N}_L}
\der_{q_j}A_i(\bar{\Phi}, \nabla\bar{\vphi})\,\der_i U\,\der_j U
+\frac{1}{w(x_1)}|\nabla V|^2\, d\rx\\
\ge &
\int_{\mathcal{N}_L}
\lambda_0|\nabla U|^2
+\frac{1}{\mu_0+L\|w'\|_{C^0([0,L])}}|\nabla V|^2\, d\rx .
\end{aligned}
\end{equation*}
Also, it follows from \eqref{derivatives-A-B} that
\begin{equation*}
\int_{\mathcal{N}_L} \der_z B(\bar{\Phi}, \nabla\bar{\vphi})V^2\,d\rx=\int_{\mathcal{N}_L} \frac{\bar{\rho}}{p'(\bar{\rho})}V^2\,d\rx\ge 0.
\end{equation*}
Using $V=0$ on $\Gamma_0$ and the fundamental theorem of calculus, we obtain
\begin{equation*}
\int_{\mathcal{N}_L} V^2\,d\rx
\le \frac{L^2}{2}\int_{\mathcal{N}_L}|\der_1 V|^2\,d\rx ,
\end{equation*}
which extends to $H^1_{\Gamma_0}(\mathcal{N}_L)$. Consequently,
\begin{equation*}
\begin{split}
\int_{\mathcal{N}_L} \left|\frac{w'
}{w^2} V\der_1V\right|\,d\rx &\le\frac{\|w'\|_{C^0([0,L])}}{\mu_0^2}\|V\|_{L^2(\mathcal{N}_L)} \|\der_1 V\|_{L^2(\mathcal{N}_L)}\\
&\le \frac{L\|w'\|_{C^0([0,L])}}{\sqrt{2}\mu_0^2}\int_{\mathcal{N}_L} |\nabla V|^2\,d\rx.
\end{split}
\end{equation*}
This yields
\begin{equation*}
\begin{split}
&\mathcal{B}_1[(U,V),U]+\mathcal{B}_2[(U,V),V]\\
\ge &  \int_{\mathcal{N}_L}\lambda_0|\nabla U|^2+\left(\frac{1}{\mu_0+L\|w'\|_{C^0([0,L])}}-\frac{L\|w'\|_{C^0([0,L])}}{\sqrt{2}\mu_0^2}\right)|\nabla V|^2\,d\rx.
\end{split}
\end{equation*}
From this estimate, we conclude that if $\|w'\|_{C^0([0,L])}$ is sufficiently small,
depending only on $\mu_0$ and $L$, then the bilinear form
$\mathcal{B}_1+\mathcal{B}_2$ is coercive. More precisely, we can fix a constant $\sigma_1>0$ sufficiently small depending only on $\mu_0$ and $L$ such that if 
\[\|w'\|_{C^0([0,L])}\le \sigma_0,\]
then 
\begin{equation*}
\mathcal{B}_1[(U,V),U]+\mathcal{B}_2[(U,V),V] \ge \int_{\mathcal{N}_L} \lambda_0|\nabla U|^2+\frac{1}{2\mu_0}|\nabla V|^2\,d\rx.
\end{equation*}

Finally, the estimate \eqref{H1-estimate-p} follows directly from the above inequality and the Cauchy--Schwarz inequality.
\end{proof}

\end{lemma}

We now give a brief outline of the proof of Proposition \ref{proposition-1}, based on Lemma \ref{lemma-p-1}.
It follows from \eqref{definition:A,B} that there exists a sufficiently small constant
$\bar{\delta}>0$, depending only on $(\bar{\rho}, \bar{u}_1, \bar{\Phi}, L)$, such that for each
$x_1\in[0,L]$, the functions ${\bf F}(x_1, z, {\bf q})$ and $f(x_1, z, {\bf q})$, defined in
\eqref{definition:F} and \eqref{definition:f}, respectively, are smooth with respect to $(z,{\bf q})$
in the region
\begin{equation}
\label{zq-domain}
|z|+|{\bf q}|\le \bar{\delta}.
\end{equation}
Moreover, one can directly verify that
\begin{equation}
\label{contraction}
\begin{split}
&|{\bf F}(x_1, z, {\bf q})-{\bf F}(x_1, z', {\bf q}')|
 + |f(x_1, z, {\bf q})-f(x_1, z', {\bf q}')| \\
 \le &
C\,(|z|+|z'|+|{\bf q}|+|{\bf q}'|)\, |(z, {\bf q})-(z', {\bf q}')|
\end{split}
\end{equation}
for all $(z, {\bf q})$ and $(z', {\bf q}')$ satisfying \eqref{zq-domain}, where $C>0$ is a constant
depending only on $(\bar{\rho}, \bar{u}_1, \bar{\Phi}, L)$.

Suppose that $(\tilde{\Psi}_j, \tilde{\phi}_j)$, $j=1,2$, satisfy
\[
\|\tilde{\Psi}_j\|_{2,\alp, \mathcal{N}_L}^{(-1-\alp, \Gam_0\cup\Gam_L)}
+\|\tilde{\phi}_j\|_{2,\alp, \mathcal{N}_L}^{(-1-\alp, \Gam_0\cup\Gam_L)}
\le \delta
\]
for some $\delta\in(0,\bar{\delta}]$. For each $j=1,2$, define
\[
{\bf F}_j:={\bf F}(x_1, \tilde{\Psi}_j, \nabla\tilde{\phi}_j),
\qquad
f_j:=f(x_1, \tilde{\Psi}_j, \nabla\tilde{\phi}_j).
\]
Then, by \eqref{contraction}, one can directly check that
\begin{equation*}
\begin{split}
&\|{\bf F}_1-{\bf F}_2\|_{1,\alp,\mathcal{N}_L}^{(-\alp, \Gam_0\cup\Gam_L)}
+\|f_1-f_2\|_{1,\alp,\mathcal{N}_L}^{(-\alp, \Gam_0\cup\Gam_L)} \\
 \le &
C_*\delta\,
\|(\tilde{\Psi}_1,\tilde{\phi}_1)-(\tilde{\Psi}_2,\tilde{\phi}_2)\|_{2,\alp,\mathcal{N}_L}^{(-1-\alp, \Gam_0\cup\Gam_L)}
\end{split}
\end{equation*}
for some constant $C_*>0$ depending only on $(\bar{\rho}, \bar{u}_1, \bar{\Phi}, L)$.
Therefore, Proposition \ref{proposition-1} follows by applying Lemma \ref{lemma-p-1}
together with the contraction mapping principle.

\subsection{The uniqueness}
\label{subsection:uniqueness 1}

We now turn to the proof of statement (b) of Theorem \ref{theorem-1}. The key ingredient in proving this statement is the following proposition.

\begin{prop}[Convexity of the {\it{$\delta$-subsonic set}} $\mathcal{P}_{\delta}$ ]
\label{proposition-convexity-p}
For $\delta \in \R$, define
\[
\mathcal{P}_{\delta}
:=\{(z,{\bf q})\in \R\times \R^n : p'(\rho(z,{\bf q}))-|{\bf q}|^2 \ge \delta\}\quad (n=2,3).
\]
Under the assumption \eqref{assumption2-thm1}, the set $\mathcal{P}_{\delta}$ is convex for any $\delta >0$.

\end{prop}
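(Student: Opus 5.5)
The plan is to describe $\mathcal{P}_{\delta}$ as the epigraph, in the $z$-variable, of a radial function of ${\bf q}$. Convexity of $\mathcal{P}_\delta$ then reduces to convexity of a one-variable profile.

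\textbf{Step 1: rewrite the constraint.} Since $p''\ge 0$, the map $\rho\mapsto p'(\rho)$ is continuous and nondecreasing. For fixed ${\bf q}$, the inequality $p'(\rho)\ge |{\bf q}|^2+\delta$ therefore holds exactly when $\rho\ge r(|{\bf q}|^2)$, where
\[
r(t):=\min\{\rho>0:\ p'(\rho)\ge t+\delta\}.
\]
If no such $\rho$ exists, no point with that ${\bf q}$ belongs to $\mathcal{P}_\delta$. Because $i$ is strictly increasing and $i(\rho(z,{\bf q}))=\mathcal{K}_0+z-\frac12|{\bf q}|^2$, we obtain
\[
\mathcal{P}_\delta=\Bigl\{(z,{\bf q}):\ |{\bf q}|\in I_\delta,\ z\ge Q(|{\bf q}|)-\mathcal{K}_0\Bigr\},\qquad Q(s):=\tfrac12 s^2+i\bigl(r(s^2)\bigr),
\]
where $I_\delta\subset[0,\infty)$ is the interval of admissible speeds $s$. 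A set of this form is convex provided two things hold: $Q$ is convex and nondecreasing on the interval $I_\delta$, and ${\bf q}\mapsto Q(|{\bf q}|)$, restricted to the ball $\{|{\bf q}|\in I_\delta\}$, is a convex function. The second follows from the first, because a composition of a convex nondecreasing function with the convex function $|{\bf q}|$ is convex. Then $\mathcal{P}_\delta$ is the epigraph of a convex function over a convex set.

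\textbf{Step 2: parametrize by $\rho$.} Here I would use \eqref{assumption2-thm1}. Set $\kappa(\rho):=\rho p''(\rho)/p'(\rho)\ge 0$, which is nonincreasing by \eqref{assumption2-thm1}. Consequently, if $\kappa$ vanishes at some point, it vanishes from there on. So there is $\rho^*\in[0,\infty]$ with $p''>0$ on $(0,\rho^*)$ and $p'$ constant on $[\rho^*,\infty)$. On the range where $p''>0$, the relation $s^2=p'(\rho)-\delta$ defines $\rho$ as a smooth increasing function of $s$, with $ds/d\rho=p''/(2s)$ for $s>0$. Along this curve $Q=\frac12(p'(\rho)-\delta)+i(\rho)$, so
\[
\frac{dQ}{ds}=\Bigl(\frac{p''}{2}+\frac{p'}{\rho}\Bigr)\frac{2s}{p''}=s\Bigl(1+\frac{2}{\kappa(\rho(s))}\Bigr).
\]
Both factors are nonnegative and nondecreasing in $s$: the first trivially, and the second because $\rho(s)$ increases while $\kappa$ is nonincreasing. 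Hence $dQ/ds$ is nonnegative and nondecreasing, which shows that $Q$ is convex and nondecreasing. At $s=0$ the derivative equals $0$, so no kink arises there.

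\textbf{Step 3: degenerate cases (the main obstacle).} The point needing care is the regime where $p''$ vanishes. If $\rho^*=0$, then $p'\equiv c$ (as for $\gamma=1$), so $r$ is constant. In that case $I_\delta=[0,\sqrt{c-\delta}]$ (or empty), and $Q(s)=\frac12 s^2+{\rm const}$, which is clearly convex. If $0<\rho^*<\infty$, the admissible speeds satisfy $s^2\le p'(\rho^*)-\delta$. The endpoint $s_{\max}$ is attained with $r(s_{\max}^2)=$ the first $\rho$ where $p'$ reaches its maximum, which is consistent with the definition of $r$ as a minimum. So $I_\delta$ is a closed interval on which the smooth analysis of Step 2 applies up to the endpoint. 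Restricting a convex function to the ball $\{|{\bf q}|\le s_{\max}\}$ keeps its epigraph convex.

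Finally, $\delta>0$ guarantees that $p'(r)=s^2+\delta>0$, so every point of $\mathcal{P}_\delta$ corresponds to a genuine positive density, and that $Q$ is well defined at $s=0$. Combining Steps 1–3 gives the convexity of $\mathcal{P}_\delta$.
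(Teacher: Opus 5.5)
Your proof is correct, but it takes a different route from the paper. The paper fixes two points of $\mathcal{P}_\delta$ and observes that $i(\rho_t)$ is concave along the segment, so $\rho_t$ stays defined and $\rho_t\ge\min\{\rho_0,\rho_1\}$. It then computes $\frac{d^2}{dt^2}\bigl(p'(\rho_t)-|{\bf q}_t|^2\bigr)$, which is a negative multiple of $|{\bf q}_1-{\bf q}_0|^2$ plus a term proportional to $\kappa'(\rho_t)\le 0$ (with $\kappa=\rho p''/p'$). That computation gives a stronger fact: $p'(\rho(z,{\bf q}))-|{\bf q}|^2$ is concave along every segment on which $\rho>0$. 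Its by-product \eqref{convexity-density-p} is also reused later in the uniqueness proof. You instead describe $\mathcal{P}_\delta$ explicitly as the epigraph in $z$ of the radial function $Q(|{\bf q}|)-\mathcal{K}_0$ over a ball. You then reduce everything to the first-order identity $Q'(s)=s\bigl(1+2/\kappa(\rho(s))\bigr)$. This uses only the monotonicity of $\kappa$, not its derivative, and it makes the shape of the subsonic set transparent. The cost is that you obtain only convexity of superlevel sets, not concavity of the function, and you must treat the degenerate regimes by hand. Two details there need repair, though neither affects the conclusion. First, when $p'\equiv c$ the minimum defining $r$ is not attained and $i(0^+)=-\infty$, so $Q$ is not $\frac12 s^2$ plus a finite constant. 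Instead the $z$-constraint simply disappears and $\mathcal{P}_\delta=\R\times\{|{\bf q}|^2\le c-\delta\}$, which is still convex. Second, when $\kappa\not\equiv 0$ you should note that monotonicity of $\kappa$ forces $p'(0^+)=0$: from $\kappa(\rho_0)>0$ one gets $(\ln p')'\ge\kappa(\rho_0)/\rho$ on $(0,\rho_0]$. Hence $r$ is attained at a positive density. At $s_{\max}$ (case $0<\rho^*<\infty$) one has $\kappa(\rho^*)=0$ and $Q'\to\infty$, so your Step 2 does not literally apply there. Convexity on the closed interval follows from continuity of $Q$ at $s_{\max}$.
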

\begin{proof} 
This proof is inspired by the argument introduced in \cite{Bonheure2016PathsTU}.

If $\mathcal{P}_{\delta} = \emptyset$, there is nothing to prove. 
Suppose that $(z_j, {\bf q}_j)\in \mathcal{P}_{\delta}$ for $j=0,1$, and put
\begin{equation*}
(z_t, {\bf q}_t):=(1-t)(z_0, {\bf q}_0)+t(z_1, {\bf q}_1).
\end{equation*}
For $t\in[0,1]$, define
\begin{equation*}
\begin{split}
\rho_t &:=\rho(z_t, {\bf q}_t),\\
\mathfrak{s}(t) &:= p'(\rho_t)-|{\bf q}_t|^2.
\end{split}
\end{equation*}
Since $p'(\rho_j)\ge \delta>0$ for $j=0,1$, it follows that $\rho_j>0$, and hence
\[
i(\rho_j)>\lim_{\rho\to 0+} i(\rho), \qquad j=0,1,
\]
where $i(\rho)$ is defined in \eqref{pseudo-Bernoulli}. Moreover, since
\[
i(\rho_t)=\mathcal{K}_0+(1-t)z_0+tz_1-\frac{1}{2}\big|(1-t){\bf q}_0+t{\bf q}_1\big|^2,
\]
we compute
\[
\frac{d^2}{dt^2} i(\rho_t) = -|{\bf q}_1-{\bf q}_0|^2 \le 0 \qquad \text{for all } t\in\mathbb{R}.
\]
Therefore,
\[
i(\rho_t)\ge \min\{i(\rho_0), i(\rho_1)\} > \lim_{\rho\to 0+} i(\rho),
\]
which implies that $\rho_t$ is well-defined for all $t\in[0,1]$ and that
\begin{equation}
\label{convexity-density-p}
    \rho_t \ge \min\{\rho_0, \rho_1\}
\end{equation}
since $i(\rho_t)$ is concave in $t\in[0,1]$.

It follows from \eqref{definition-enthalpy} that
\[
\frac{d\rho_t}{dt}=\frac{\rho_t}{p'(\rho_t)}\frac{di(\rho_t)}{dt}.
\]
This yields
\begin{equation*}
\begin{split}
\frac{d^2}{dt^2}p'(\rho_t)&=\frac{d}{dt}\left(\frac{p''(\rho_t)\rho_t}{p'(\rho_t)}\frac{di(\rho_t)}{dt}\right)\\
&=\frac{p''(\rho_t)\rho_t}{p'(\rho_t)}\frac{d^2}{dt^2}i(\rho_t)+\frac{d}{d\rho_t}\left(\frac{p''(\rho_t)\rho_t}{p'(\rho_t)}\right)\left(\frac{di(\rho_t)}{dt}\right)^2\frac{\rho_t}{p'(\rho_t)}\\
&=-\frac{p''(\rho_t)\rho_t}{p'(\rho_t)}|{\bf q}_1-{\bf q}_0|^2+\frac{d}{d\rho_t}\left(\frac{p''(\rho_t)\rho_t}{p'(\rho_t)}\right)\left(\frac{di(\rho_t)}{dt}\right)^2\frac{\rho_t}{p'(\rho_t)}.
\end{split}
\end{equation*}
Therefore, we obtain
\begin{equation*}
    \frac{d^2}{dt^2}\mathfrak{s}(t)
    = -\left(1+\frac{p''(\rho_t)\rho_t}{p'(\rho_t)}\right)\lvert {\bf q}_1-{\bf q}_0\rvert^2
    + \frac{d}{d\rho_t}\!\left(\frac{p''(\rho_t)\rho_t}{p'(\rho_t)}\right)
    \left(\frac{di(\rho_t)}{dt}\right)^2\frac{\rho_t}{p'(\rho_t)}.
\end{equation*}
Under the assumption \eqref{assumption2-thm1}, we have
\begin{equation}
\label{convextiy-s}
\frac{d^2}{dt^2}\mathfrak{s}(t)\le 0
\quad\text{for all } t\in[0,1].
\end{equation}
This indicates that $\mathfrak{s}(t)$ is concave on $t\in[0,1]$.
Since $\mathfrak{s}(0)\ge \delta$ and $\mathfrak{s}(1)\ge \delta$, it follows from the concavity of $\mathfrak{s}(t)$ that
\[
\mathfrak{s}(t)\ge \delta \quad\text{for all } t\in[0,1].
\]
Therefore, $(z_t, {\bf q}_t)\in \mathcal{P}_{\delta}$ for all $t\in[0,1]$. This completes the proof.

\end{proof}

\begin{proof}[Proof of part (b) of Theorem \ref{theorem-1}] The proof is divided into three steps.

{{\it Step 1.}} For $j=0,1$, let
\[
(\rho_j, {\bf u}_j, \Phi_j)\in
\bigl[C^0(\ol{\mathcal{N}_L})\cap C^1(\mathcal{N}_L)\bigr]
\times
\bigl[C^0(\ol{\mathcal{N}_L};\R^n)\cap C^1(\mathcal{N}_L;\R^n)\bigr]
\times
\bigl[C^1(\ol{\mathcal{N}_L})\cap C^2(\mathcal{N}_L)\bigr]
\]
be two subsonic solutions of Problem~\ref{main problem} in $\ol{\mathcal{N}_L}$. One has
\[
\delta_j:=\min_{\ol{\mathcal{N}_L}}\bigl(p'(\rho_j)-|{\bf u}_j|^2\bigr)>0.
\]

Define 
\begin{equation}
\label{definition-deltas}
    \delta_*:=\min\{\delta_0, \delta_1\}.
\end{equation}

For each $j=0,1$, consider the boundary value problem
\begin{equation}
\label{aux-bvp-p}
\begin{split}
\Delta \varphi = \nabla\cdot {\bf u}_j \quad &\text{in } \mathcal{N}_L,\\
\nabla \varphi \cdot {\bf n} = {\bf u}_j \cdot {\bf n} \quad 
&\text{on } \partial\mathcal{N}_L \setminus \Gamma_L,\\
\varphi = \bar{\varphi}(L) \quad &\text{on } \Gamma_L,
\end{split}
\end{equation}
where ${\bf n}$ denotes the outward unit normal vector on $\partial\mathcal{N}_L$.
For each $j=0,1$, it is well known that the boundary value problem \eqref{aux-bvp-p}
admits a unique weak solution $\varphi_j \in H^1(\mathcal{N}_L)$.
Moreover, $\varphi_j$ belongs to $C^1(\ol{\mathcal{N}_L})\cap C^2(\mathcal{N}_L)$.

Define
\[
{\bf w} := \nabla \varphi_0 - {\bf u}_0.
\]
Then
\[
\nabla \times {\bf w} = {\bf 0} \quad \text{in } \mathcal{N}_L,
\]
and hence there exists a function $\phi \in C^2(\mathcal{N}_L)$ such that
${\bf w} = \nabla \phi$.
Furthermore, it follows from \eqref{aux-bvp-p} that $\phi$ satisfies
\[
\begin{cases}
\Delta \phi = 0 & \text{in } \mathcal{N}_L,\\
\nabla \phi \cdot {\bf n} = 0 & \text{on } \partial\mathcal{N}_L \setminus \Gamma_L,\\
\phi = \text{constant} & \text{on } \Gamma_L,
\end{cases}
\]
where the last condition follows from $\partial_{\bm\tau}\phi = 0$ on $\Gamma_L$
for all unit tangential vectors ${\bm\tau}$ on $\Gamma_L$. This implies that $\phi$ is constant in $\overline{\mathcal{N}_L}$, and hence
\[
\nabla \varphi_0 = {\bf u}_0 \quad \text{in } \overline{\mathcal{N}_L}.
\]
By the same argument, we also obtain
\[
\nabla \varphi_1 = {\bf u}_1 \quad \text{in } \overline{\mathcal{N}_L}.
\]
Consequently, $(\varphi_0, \Phi_0)$ and $(\varphi_1, \Phi_1)$ both solve the nonlinear boundary value problem
\eqref{EP-Potential}--\eqref{EP-potential-BC}.
\medskip

{\it Step 2.} For $t\in[0,1]$, define
\[
(\varphi_t, \Phi_t):=(1-t)(\varphi_0, \Phi_0)+t(\varphi_1, \Phi_1).
\]
For $\der_{q_j}A_i(z,{\bf q})$ given by \eqref{coefficients-p}, define
\[
a_{ij}^t:=\der_{q_j}A_i(\Phi_t,\nabla\varphi_t)
\]
for $t\in[0,1]$. 

By Proposition \ref{proposition-convexity-p} and \eqref{definition-deltas}, we have
\begin{equation}
\label{matrix-prop1-p}
\min_{\ol{\mathcal{N}_L}} \left(p'(\rho_t)-|\nabla\varphi_t|^2\right) \ge \delta_*\quad\text{for all $t\in[0,1]$.}
\end{equation}

By \eqref{definition-deltas}, we have
\[
\min_{j=0,1} p'(\rho_j(\rx)) \ge \delta_*
\quad\text{for all $\rx\in \ol{\mathcal{N}_L}$.}\]
Since $p''(\rho)>0$, the function $p'$ is strictly increasing. Hence, there exists a constant ${\varrho}_*>0$, depending only on $\delta_*$, such that
\[
\min_{j=0,1} \rho_j(\rx) \ge {\varrho}_* \quad\text{for all $\rx\in \ol{\mathcal{N}_L}$}.
\]
Combining this with \eqref{convexity-density-p}, we deduce that
 \begin{equation}
\label{matrix-prop2-p}
\rho_t\ge {\varrho}_* \quad\text{for all $\rx\in \ol{\mathcal{N}_L}$}.
 \end{equation}
Set
\[
\mathcal{K}_0^* := \mathcal{K}_0 + \max_{j=0,1} \|\Phi_j\|_{C^0(\ol{\mathcal{N}_L})}.
\]

By \eqref{pseudo-Bernoulli}, it follows that
\[
i(\rho_t) \le \mathcal{K}_0^* \quad \text{in } \ol{\mathcal{N}_L} \text{ for all } t \in [0,1].
\]
Therefore, there exists a constant $\varrho^*>0$, depending only on $\mathcal{K}_0^*$, such that
\begin{equation}
\label{matrix-prop3-p}
\rho_t \le \varrho^* \quad \text{in } \ol{\mathcal{N}_L} \text{ for all } t \in [0,1].
\end{equation}

As a matrix, ${\mathbb A}^t:=[a_{ij}^t]_{i,j=1}^n$ is symmetric. For each ${\bm \xi}=(\xi_1,\cdots, \xi_n)\in \R^n$ for $n=2$ or 3, observe 
\begin{equation}
\label{def-lambda}
\begin{split}
{\bm\xi}{\mathbb A}^t{\bm\xi}^T
= \left(|{\bm\xi}|^2-\frac{(\nabla\varphi_t\cdot {\bm\xi})^2}{p'(\rho_t)}\right)\rho_t\ge \left(1-\frac{|\nabla\varphi_t|^2}{p'(\rho_t)}\right)\rho_t |{\bm\xi}|^2\quad\text{in $\ol{\mathcal{N}_L}$.}
\end{split}
\end{equation}
Combining \eqref{def-lambda} with \eqref{matrix-prop1-p}--\eqref{matrix-prop3-p}, we obtain that
\begin{equation}
\label{ellipticity-p-uq}
\frac{\delta_*}{p'(\varrho^*)} |{\bm\xi}|^2 \le {\bm\xi}{\mathbb A}^t{\bm\xi}^T \le \varrho^* |{\bm\xi}|^2 \quad\text{in $\ol{\mathcal{N}_L}$ for all $\bm\xi\in \R^n$, $t\in [0,1]$. }
\end{equation}

{{\it Step 3.}} 
Since both $(\varphi_0,\Phi_0)$ and $(\varphi_1, \Phi_1)$ are classical solutions to \eqref{EP-Potential}--\eqref{EP-potential-BC}, we have
\begin{equation*}
\begin{cases}
\int_{\mathcal{N}_L} (\vphi_1-\vphi_0) \nabla\cdot\left({\bf A}(\Phi_1, \nabla\vphi_1)-{\bf A}(\Phi_0, \nabla\vphi_0)\right)\,d\rx=0\\
\int_{\mathcal{N}_L}(\Phi_1-\Phi_0)\left(\frac{1}{w(x_1)}\Delta(\Phi_1-\Phi_0)-\left(\rho(\Phi_1, \nabla\vphi_1)-\rho(\Phi_0, \nabla\vphi_0)\right)\right)\,d\rx=0
\end{cases}
\end{equation*}
for ${\bf A}(z,{\bf q})$ given by \eqref{definition:A,B}.
Furthermore, each $\rho_j=\rho(\Phi_j,\nabla\varphi_j)$ belongs to $C^0(\ol{\mathcal{N}_L})$, which implies that $(\varphi_j,\Phi_j)$ is a weak solution of \eqref{EP-Potential}--\eqref{EP-potential-BC}. Since $\partial\mathcal{N}_L$ is Lipschitz, the integration by parts formula is therefore applicable, and we obtain the following.

\begin{equation}
\label{energy-1}
    \begin{split}
    0&=\int_{\mathcal{N}_L}(\Phi_1-\Phi_0)\left(\frac{1}{w(x_1)}\Delta(\Phi_1-\Phi_0)-(\rho(\Phi_1, \nabla\vphi_1)-\rho(\Phi_0, \nabla\vphi_0)\right)\,d\rx\\
    &=-\int_{\mathcal{N}_L}\frac{1}{w(x_1)}|\nabla(\Phi_1-\Phi_0)|^2+(\Phi_1-\Phi_0)^2\int_0^1\rho_z(\Phi_t, \nabla\varphi_t)\,dt \,d\rx\\
    &\phantom{.......}-\int_{\mathcal{N}_L}(\Phi_1-\Phi_0) \nabla(\vphi_1-\vphi_0)\cdot \int_0^1  \rho_{\bf q}(\Phi_t, \nabla\varphi_t)\,dt\,d\rx\\
&\phantom{.......}+\int_{\mathcal{N}_L} \frac{w'(x_1)}{w^2(x_1)}(\Phi_1-\Phi_0)\der_1(\Phi_1-\Phi_0)\,d\rx;
     \end{split}
\end{equation}

\begin{equation}
\label{energy-2}
\begin{split}
   0 &=\int_{\mathcal{N}_L} (\vphi_1-\vphi_0) \nabla\cdot\left({\bf A}(\Phi_1, \nabla\vphi_1)-{\bf A}(\Phi_0, \nabla\vphi_0)\right)\,d\rx\\
&=-\int_{\mathcal{N}_L} \left({\bf A}(\Phi_1, \nabla\vphi_1)-{\bf A}(\Phi_0, \nabla\vphi_0)\right)\cdot \nabla(\vphi_1-\vphi_0)\,d\rx\\
&=-\int_{\mathcal{N}_L} \nabla(\vphi_1-\vphi_0)\cdot (\Phi_1-\Phi_0) \int_0^1{\bf A}_z(\Phi_t, \nabla\vphi_t)\,dt\,d\rx\\
&\phantom{.......}-\int_{\mathcal{N}_L} \der_i(\vphi_1-\vphi_0)\der_j (\vphi_1-\vphi_0)\int_0^1\der_{q_i}A_j (\Phi_t, \nabla\vphi_t)\,dt\,d\rx.
\end{split}
\end{equation}

By \eqref{ellipticity-p-uq}, we obtain
\begin{equation*}
\int_{\mathcal{N}_L} 
\der_i(\vphi_1-\vphi_0)\der_j(\vphi_1-\vphi_0)
\int_0^1 \der_{q_i}A_j(\Phi_t, \nabla\vphi_t)\,dt\, d\rx
\ge 
\frac{\delta_*}{p'(\varrho^*)}
\int_{\mathcal{N}_L} |\nabla(\varphi_1-\varphi_0)|^2 \, d\rx.
\end{equation*}

Moreover, it follows from \eqref{essential observation-p} that
\begin{equation*}
\rho_{\bf q}(\Phi_t, \nabla\varphi_t)
+ {\bf A}_z(\Phi_t, \nabla\varphi_t)
= 0
\quad \text{in } \ol{\mathcal{N}_L}
\quad \text{for all } t \in [0,1].
\end{equation*}

Adding \eqref{energy-1}--\eqref{energy-2} and applying \eqref{ragne of w-p}, we deduce that
\begin{equation*}
\begin{split}
&\int_{\mathcal{N}_L}
\frac{1}{\mu_0+L\|w'\|_{C^0([0,L])}}|\nabla(\Phi_1-\Phi_0)|^2
+ \frac{\delta_*}{p'(\varrho^*)}
|\nabla(\varphi_1-\varphi_0)|^2
 d\rx\\
 &+\int_{\mathcal{N}_L}
(\Phi_1-\Phi_0)^2
\int_0^1 \rho_z(\Phi_t, \nabla\varphi_t)\,dt\, d\rx\\
\le & \int_{\mathcal{N}_L} \frac{w'(x_1)}{w^2(x_1)}(\Phi_1-\Phi_0)\der_1(\Phi_1-\Phi_0)\,d\rx.
\end{split}
\end{equation*}
By \eqref{derivatives-A-B} and \eqref{matrix-prop2-p}, we have
\[\min_{\ol{\mathcal{N}_L}}\rho_z(\Phi_t,\nabla\varphi_t)=\min_{\ol{\mathcal{N}_L}}\frac{\rho_t}{p'(\rho_t)}>0.\]
Therefore, it follows from \eqref{assumption1-thm1}, Poincar\'{e} inequality, and the Cauchy-Schwarz inequality that if $\|w'\|_{C^0([0,L])}$ is sufficiently small, depending only on $(\mu_0, L)$, then
\[\int_{\mathcal{N}_L}
\frac{1}{\mu_0+L\|w'\|_{C^0([0,L])}}|\nabla(\Phi_1-\Phi_0)|^2
+ \frac{\delta_*}{p'(\varrho^*)}
|\nabla(\varphi_1-\varphi_0)|^2\, d\rx\le 0,\]
from which it immediately follows that 
\[
|\nabla(\Phi_1-\Phi_0)|=|\nabla(\varphi_1-\varphi_0)|=0\quad\text{in $\mathcal{N}_L$}.
\]
Combining the above estimate with the boundary conditions
\begin{equation*}
    \begin{split}
    \Phi_j=\bar{\Phi}(0)+h_0 \quad &\text{on } \Gam_0,\\
    \varphi_j=\bar{\varphi}(L) \quad &\text{on } \Gam_L,
    \end{split}
\end{equation*}
prescribed in \eqref{EP-potential-BC}, we conclude that
\[
(\varphi_0, \Phi_0) = (\varphi_1, \Phi_1)
\quad \text{in } \ol{\mathcal{N}_L}.
\]
This completes the proof of Theorem \ref{theorem-1}.
\end{proof}

\section{Proof of Theorem \ref{theorem-2}}
\label{section:3}
In this section, we study the Euler-Poisson equation of self-gravitation type and prove Theorem \ref{theorem-2}.

Assume that
\begin{equation}
\label{assumption1-thm2}
    \mu_1:=\max_{0\le x_1\le L} w(x_1)<0.
\end{equation}
Then one has
\[
 \mu_1-L\|w'\|_{C^0([0,L])} \le  w(x_1)\le \mu_1\quad\text{on $[0,L]$},
\]
or equivalently
\begin{equation}
\label{estimate of w-s}
\frac{1}{\tilde{\mu}_1+L\|w'\|_{C^0([0,L])}}\le -\frac{1}{w(x_1)}\le  \frac{1}{\tilde{\mu}_1}\quad\text{on $[0,L]$}\,\,\text{for $\tilde{\mu}_1:=-\mu_1$.}
\end{equation}

Let $(\bar{\rho}, {\bar u}_1{\bf e}_1, \bar{\Phi})$ be a smooth subsonic solution of \eqref{EP-1d} on $[0,L]$ for a constant $J>0$. Put
\[
k_0:=\frac 12 \bar{u}_1^2(0)+\frac{\gamma \bar{\rho}^{\gam-1}(0)}{\gamma-1}-\bar{\Phi}(0).
\]
It follows from the continuity equation that there exists a function $\psi$ such that $\rho{\bf u}=\nabla^{\perp}\psi$,
where \[\nabla^{\perp}:=(\der_2, -\der_1)\quad\text{for}\quad
\der_j:=\frac{\der}{\der x_j},\,\,j=1,2.\]
Hence the Euler-Poisson equations can be written as 
\begin{equation}
\label{EP-stream}
    \begin{cases}
\nabla\cdot\left(\frac{\nabla\psi}{\rho}\right)=0\\
    \frac 12 \frac{|\nabla\psi|^2}{\rho^2}+\frac{\gamma \rho^{\gamma-1}}{\gamma-1}-\Phi=k_0\\
    \Delta\Phi=w(x_1)\rho-b(x_1).
    \end{cases}
\end{equation}

Define
\begin{equation}
\label{definition-B-s}
\mathfrak{B}(\rho, {\bf q})
:=\frac{|{\bf q}|^2}{2\rho^2}+\frac{\gamma \rho^{\gamma-1}}{\gamma-1}.
\end{equation}
The straightforward computations show that
\begin{equation}
\label{definition-rhos-s}
\partial_{\rho}\mathfrak{B}(\rho, {\bf q})
\begin{cases}
<0, & \text{if } \rho<\rho_s({\bf q}),\\[2pt]
=0, & \text{if } \rho=\rho_s({\bf q}),\\[2pt]
>0, & \text{if } \rho>\rho_s({\bf q}),
\end{cases}
\quad \text{where}\quad
\rho_s({\bf q})
=\left(\dfrac{|{\bf q}|^2}{\gamma}\right)^{\frac{1}{\gamma+1}}.
\end{equation}
Moreover,
\[
\lim_{\rho\to 0+}\mathfrak{B}(\rho, {\bf q})=\infty \quad \text{if } |{\bf q}|\neq 0,
\qquad
\lim_{\rho\to \infty}\mathfrak{B}(\rho, {\bf q})=\infty.
\]
Therefore, if
\[
k_0+\Phi>\mathfrak{B}(\rho_s(\nabla\psi), \nabla\psi),
\]
then there exist two values $\rho_-$ and $\rho_+$ such that
\[
\rho_-<\rho_s(\nabla\psi)<\rho_+
\quad \text{and} \quad
\mathfrak{B}(\rho_{\pm}, \nabla\psi)-\Phi=k_0.
\]
We denote these two values by
\[
\rho_-=\rho_{\rm sup}(\nabla\psi, \Phi),
\qquad
\rho_+=\rho_{\rm sub}(\nabla\psi, \Phi).
\]
The state $(\rho_{\rm sup}(\nabla\psi, \Phi), \psi, \Phi)$ is supersonic, whereas  
the state $(\rho_{\rm sub}(\nabla\psi, \Phi), \psi, \Phi)$ is subsonic.
Since we seek a subsonic solution in (Problem \ref{main problem}), we rewrite \eqref{EP-stream} in the form
\begin{equation}
\label{EP-stream-final}
    \begin{cases}
    k_0+\Phi-\mathfrak{B}(\rho_s(\nabla\psi), \nabla\psi)>0,\\[2pt]
    \nabla\cdot \left(\dfrac{\nabla\psi}{\rho_{\rm sub}(\nabla\psi, \Phi)}\right)=0,\\[6pt]
    \Delta\Phi=w(x_1)\rho_{\rm sub}(\nabla\psi, \Phi)-b(x_1).
    \end{cases}
\end{equation}

We rewrite the boundary conditions \eqref{elec-bc-1}--\eqref{elec-bc-3} as follows: 
\begin{equation}
\label{EP-stream-BC}
\begin{split}
\psi(0, x_2)=\int_0^{x_2} (J+g_0(t))\,dt,\quad \Phi=\bar{\Phi}(0)+h_0\quad&\mbox{on $\Gam_0$},\\
\psi(x_1, x_2)=\int_0^{x_2} (J+g_0(t))\,dt,\quad \nabla\Phi\cdot {\bf n}_w=0\quad&\mbox{on $\Gam_w$},\\
\nabla\psi\cdot {\bf n}_L=0,\quad \nabla\Phi\cdot {\bf n}_L=\bar{E}(L)+v_L\quad & \mbox{on $\Gam_L$}.
\end{split}
\end{equation}

Theorem \ref{theorem-2} is proved by an argument similar to that of Theorem \ref{theorem-1}.
In \S \ref{subsection:3.1}, following the argument in \cite{bae2015two}, we show that the boundary value problem \eqref{EP-stream-final}--\eqref{EP-stream-BC} admits a solution in a small neighborhood of the background solution provided that \eqref{assumption1-thm2} and \eqref{assumption2-thm2}--\eqref{assumption3-thm2} hold. This establishes part~(a) of Theorem~\ref{theorem-2}.
In \S~\ref{subsection:3.2}, we introduce {\it{the $\lambda$-set} $\mathcal{S}_{\lambda}(\gamma)$} for the stream function formulation~\eqref{EP-stream-final} and show that it is a convex set. This serves as a key ingredient in the proof of part~(b) of Theorem~\ref{theorem-2}.

\subsection{The existence of a solution}
\label{subsection:3.1}
Since $(\bar{\rho}, \bar{u}_1{\bf e}_1, \bar{\Phi})$ is a subsonic solution on $[0,L]$, one has
\begin{equation}
\label{subsonicity-s}
\mathcal{K}_2 := \min_{0\le x_1\le L}\left(\gamma\bar{\rho}^{\gamma-1}-\bar u_1^2\right) >0.
\end{equation}

Define
\begin{equation*}
\bar{\psi}(\rx):=Jx_2\quad\text{for $\rx=(x_1, x_2)\in \ol{\mathcal{N}_L}$.}
\end{equation*}

\begin{prop}
\label{proposition-2}
Assume that \eqref{assumption1-thm2} holds.
Let $\alp\in(0,1)$ be given. There exists a constant $\sigma_1>0$ such that if $w$ and the boundary data $(g_0,h_0, v_L)$ satisfy conditions \eqref{assumption2-thm2} and \eqref{assumption3-thm2} in Theorem~\ref{theorem-2}, then the system \eqref{EP-stream-final} in $\mathcal{N}_L$ with boundary conditions stated in \eqref{EP-stream-BC} admits a solution
\[
(\psi, \Phi)\in \bigl[C^{1,\alp}(\ol{\mathcal{N}_L})\cap C^{2,\alp}(\mathcal{N}_L)\bigr]^2
\]
with the following properties:
\begin{itemize}
\item[(a)] There exists a constant $\lambda>0$ such that
\[
k_0+\Phi-\mathfrak{B}(\rho_s(\nabla\psi), \nabla\psi) \ge \lambda
\quad \text{in } \ol{\mathcal{N}_L}.
\]
\item[(b)] There exists a constant $C>0$ such that
\[
\|(\psi, \Phi)-(\bar{\psi}, \bar{\Phi})\|_{2,\alp,\mathcal{N}_L}^{(-1-\alp,\Gam_w)}
\le
C\Bigl(\|g_0\|_{C^2(\ol{\Gam_0})}+\|h_0\|_{C^2(\ol{\Gam_0})}
+\|v_L\|_{C^2(\ol{\Gam_L})}
\Bigr).
\]
\end{itemize}
The constants $\sigma_1$, $\lambda$, and $C$ depend only on $(\bar{\rho}, \bar{u}_1, \bar{\Phi}, L, \mu_1, \alp)$.
\end{prop}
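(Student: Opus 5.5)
The plan is to follow the scheme of Proposition~\ref{proposition-1}: linearize \eqref{EP-stream-final}--\eqref{EP-stream-BC} about $(\bar\psi,\bar\Phi)$, solve the linear problem with weighted H\"older estimates, and close by a contraction argument. Set $(\phi,\Psi):=(\psi,\Phi)-(\bar\psi,\bar\Phi)$. Subtract $\int_0^{x_2}g_0(t)\,dt$ (suitably extended into $\mathcal{N}_L$) so that $\phi=0$ on $\Gam_0\cup\Gam_w$. Write ${\bf A}^s(z,{\bf q}):={\bf q}/\rho_{\rm sub}({\bf q},z)$ and $B^s(z,{\bf q}):=\rho_{\rm sub}({\bf q},z)$. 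Implicit differentiation of $\mathfrak{B}(\rho,{\bf q})-z=k_0$, with $\der_\rho\mathfrak{B}=(\gamma\rho^{\gamma+1}-|{\bf q}|^2)/\rho^3>0$ in the subsonic branch, gives
\[
\der_z\rho_{\rm sub}=\frac{\rho^3}{\gamma\rho^{\gamma+1}-|{\bf q}|^2},\qquad \der_{\bf q}\rho_{\rm sub}=-\frac{\rho\,{\bf q}}{\gamma\rho^{\gamma+1}-|{\bf q}|^2}.
\]
At the background, $\der_{q_j}A^s_i$ is diagonal with entries $\frac{1}{\bar\rho}\frac{\gamma\bar\rho^{\gamma-1}}{\gamma\bar\rho^{\gamma-1}-\bar u_1^2}$ and $\frac1{\bar\rho}$, which is uniformly elliptic by \eqref{subsonicity-s}. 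The principal linear system is then
\[
{\rm div}\bigl(\der_{\bf q}{\bf A}^s\nabla\phi+\Psi\,\der_z{\bf A}^s\bigr)={\rm div}{\bf F},\qquad \tfrac{1}{w}\Delta\Psi-\der_zB^s\Psi-\der_{\bf q}B^s\cdot\nabla\phi=f,
\]
with $\der_z{\bf A}^s=-\bar u_1\bar\rho\,{\bf e}_2/(\ldots)$-type terms, up to sign and a ${}^\perp$ rotation. The boundary conditions are $\phi=0$ on $\Gam_0\cup\Gam_w$, $\der_1\phi=0$ on $\Gam_L$, $\Psi=h_0$ on $\Gam_0$, $\der_{\bf n}\Psi=0$ on $\Gam_w$, and $\der_{\bf n}\Psi=v_L$ on $\Gam_L$.

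The key step is the $H^1$ estimate, needed for Lax--Milgram. Here the sign structure differs from Case~1. Now $w<0$, so $-\frac1w\ge \frac{1}{\tilde\mu_1+L\|w'\|}$ by \eqref{estimate of w-s}, while $\der_zB^s>0$ enters with the unfavorable sign. I would test the first equation with $\phi$ and the second with $-\Psi$. The coupling terms have the form $\Psi\,\der_z{\bf A}^s\cdot\nabla\phi$ and $\Psi\,\der_{\bf q}B^s\cdot\nabla\phi$; because the background $\nabla\bar\psi=J{\bf e}_2$ is rotated relative to ${\bf q}$, these no longer cancel as in \eqref{essential observation-p}.

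I would handle this following \cite{bae2015two}. First, since $\Psi=0$ on $\Gam_0$ after subtracting $h_0$, a Poincar\'e inequality in $x_1$ gives $\|\Psi\|_{L^2}\le \frac{L}{\sqrt2}\|\der_1\Psi\|_{L^2}$. Second, the zeroth order term $\int\der_zB^s\Psi^2$ and the cross terms are absorbed by either (i) a suitable multiplier/weight choice, testing the $\Psi$-equation with $-\Psi$ times a constant chosen to symmetrize the coupling, or (ii) the smallness available from the background relation. If neither works for general $L$, I would follow \cite{bae2015two}, where the one-dimensional linearized operator is shown to be invertible: there a Fredholm alternative replaces the coercivity argument, and uniqueness for the linear problem comes from a separation-of-variables or energy argument in $x_2$ using the Neumann/Dirichlet structure. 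The $w'$ term is treated exactly as in the Case~1 lemma, under $\|w'\|\le\sigma_1$. This coercivity, or invertibility, of the linearized coupled system is the step I expect to be the main obstacle.

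Next I would obtain regularity up to the boundary, including the corners. Since the estimate here is in $C^{2,\alpha}_{(-1-\alpha,\Gam_w)}$, the compatibility conditions \eqref{assumption3-thm2} allow even reflection of $\Psi$ across $x_2=0,L$ (Neumann data), and odd reflection of $\phi$ (Dirichlet zero data, with the even/odd parity of the coefficients preserved because $\bar\psi=Jx_2$ yields odd reflection of $\phi$ consistent with the system). This reduces the corner analysis at $\Gam_w\cap(\Gam_0\cup\Gam_L)$ to a periodic-in-$x_2$ problem, where the Schauder estimates of \cite{bae2015two} apply; this is why vanishing up to second derivatives is required.

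Finally, I would define the iteration map $(\tilde\phi,\tilde\Psi)\mapsto(\phi,\Psi)$ by solving the linear problem with ${\bf F},f$ evaluated at $(\tilde\phi,\tilde\Psi)$. These nonlinear remainders are quadratic exactly as in \eqref{contraction}, provided $\rho_{\rm sub}$ is smooth near the background, which holds by the implicit function theorem since $\der_\rho\mathfrak{B}>0$ there. The contraction mapping principle on a ball of radius $C\sigma_1$ then yields the solution and estimate (b). Property (a) follows from (b) by continuity, since $k_0+\bar\Phi-\mathfrak{B}(\rho_s(\nabla\bar\psi),\nabla\bar\psi)>0$ uniformly on $[0,L]$ by \eqref{subsonicity-s}.
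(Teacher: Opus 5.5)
The outer scheme matches what the paper has in mind: linearize at $(\bar\psi,\bar\Phi)$, get an $H^1$ bound and apply Lax--Milgram, reflect across $\Gam_w$ using \eqref{assumption3-thm2} and apply Schauder estimates, contract, then deduce (a) from (b). The gap is in the step you yourself call the crux: you misdiagnose it and then leave it unproved.

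\textbf{The coupling terms do cancel.} From your own formulas,
$\partial_z{\bf A}^s=-{\bf q}\,\partial_z\rho_{\rm sub}/\rho_{\rm sub}^2=-\rho_{\rm sub}{\bf q}/(\gamma\rho_{\rm sub}^{\gamma+1}-|{\bf q}|^2)=\partial_{\bf q}B^s$.
Both vectors are parallel to ${\bf q}=\nabla\psi$ itself; at the background both point along ${\bf e}_2$. The rotation ${}^\perp$ only relates $\nabla\psi$ to $\rho{\bf u}$ and plays no role here. So with exactly your test pair $(\phi,-\Psi)$, the terms $\int\Psi\,\partial_z{\bf A}^s\cdot\nabla\phi$ and $\int\Psi\,\partial_{\bf q}B^s\cdot\nabla\phi$ cancel identically. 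This is the identity \eqref{essential obser-s}, written in the paper's convention $B=-\rho_{\rm sub}$. Together with \eqref{ellipticity-s}, it is exactly what the paper invokes to transfer the proof of Proposition~\ref{proposition-1}. Your symmetrizing multiplier and your appeal to ``smallness of the coupling'' are aimed at an obstruction that does not exist.

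\textbf{The zeroth-order term is what remains, and you give no argument for it.} After the cancellation the form is, up to an overall sign,
$\int \partial_{\bf q}{\bf A}^s\nabla\phi\cdot\nabla\phi+\frac{1}{|w|}|\nabla\Psi|^2-\bar c\,\Psi^2$, plus the $w'$ term. Here $\bar c:=\partial_z\rho_{\rm sub}(\bar\Phi,\nabla\bar\psi)=\bar\rho/(\gamma\bar\rho^{\gamma-1}-\bar u_1^2)>0$. This coefficient is not small; it blows up as the background approaches sonic. Since the $\phi$-part is now decoupled, everything rests on the $\Psi$-part.

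Your Poincar\'e inequality closes that part only if roughly $\frac{L^2}{2}\max|w|\,\max\bar c<1$, and this is not a hypothesis. The Fredholm route does not bypass the difficulty, and an argument ``in $x_2$'' cannot detect it. Suppose $\chi(x_1)\not\equiv 0$ solves $\frac{1}{|w|}\chi''+\bar c\,\chi=0$ with $\chi(0)=\chi'(L)=0$. Then $(\phi,\Psi)=(0,\chi)$ solves the homogeneous linearized problem with all its boundary conditions, because $\chi\,\partial_z{\bf A}^s$ is a function of $x_1$ times ${\bf e}_2$ and is therefore divergence free. Injectivity thus requires a non-degeneracy property of the one-dimensional background. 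That property is not among the stated hypotheses, and you attribute it to \cite{bae2015two} without proof.

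The analogy with Case~1 breaks exactly here. There $\partial_zB=\rho/p'(\rho)>0$, so $\int\partial_zB\,V^2\ge 0$; here $\partial_zB=-\partial_z\rho_{\rm sub}<0$. The paper likewise delegates this point to \cite{bae2015two}. As written, the estimate on which Lax--Milgram, the Schauder bootstrap and the contraction all depend is a list of options rather than a proof.
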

This proposition establishes part (a) of Theorem~\ref{theorem-2}.

\medskip

For $(z, {\bf q})=(z,q_1, q_2)\in \R\times \R^2$ satisfying
\begin{equation}
\label{nondegeneracy-s}
    k_0+z-\mathfrak{B}(\rho_s({\bf q}), {\bf q})>0,
\end{equation}
define
\begin{equation}
\label{definition:A,B-s}
    {\bf A}(z, {\bf q})=(A_1, A_2)(z,{\bf q}):=\frac{\bf q}{\rho_{\rm sub}({\bf q}, z)},\quad B(z, {\bf q}):=-\rho_{\rm sub}({\bf q},z).
\end{equation}
Suppose that $(\psi, \Phi)\in [C^1(\ol{\mathcal{N}_L})\cap C^2(\mathcal{N}_L)]^2$ solves the boundary value problem \eqref{EP-stream-final}--\eqref{EP-stream-BC}, and set
\begin{equation*}
    (\phi, \Psi):=(\psi, \Phi)-(\bar{\psi}, \bar{\Phi}).
\end{equation*}
The straightforward computations yield the following boundary value problem for $(\phi, \Psi)$:
\begin{equation}
\label{equation-s}
 \begin{cases}
  L_1(\phi, \Psi)={\rm div}{\bf F}(x_1, \Psi, \nabla\phi)\\
  L_2(\phi, \Psi)=f(x_1, \Psi, \nabla\phi)
 \end{cases}\quad \mbox{in $\mathcal{N}_L$},
\end{equation}
with boundary conditions
\begin{align*}
    \phi(x_1, x_2)=\int_0^{x_2} g_0(t)\,dt,\quad
    \Psi=h_0
\quad&\mbox{on $\Gam_0$},\\
\phi(x_1, x_2)=
\int_0^{x_2} g_0(t)\,dt,\quad \nabla\Psi\cdot {\bf n}_w=0\quad &\mbox{on $\Gam_w$},\\
\nabla\phi\cdot {\bf n}_L=0,\quad \nabla\Psi\cdot 
 {\bf n}_L=v_L\quad &\mbox{on $\Gam_L$}
\end{align*}
for
\begin{align}
\label{definition:L-s}
L_1(\phi, \Psi)
    &:={\rm div}\left( \der_{q_j}A_i(\bar{\Phi}, \nabla\bar{\psi})\der_j\phi+\Psi\der_z{\bf A}(\bar{\Phi}, \nabla\bar{\psi})\right),\\
    \notag
L_2(\psi, \Psi)&:=\frac{-1}{w(x_1)}\Delta \Psi-\der_zB(\bar{\Phi}, \nabla\bar{\psi})\Psi-\der_{\bf q}B(\bar{\Phi}, \nabla\bar{\psi})\cdot \nabla \phi,\\
\label{definition:F-s}
{\bf F}(x_1, z, {\bf q}):&=(F_1, \cdots, F_n)(x_1, z, {\bf q})\quad\text{with}\\
\notag
-F_i(x_1, z, {\bf q}):&=\int_0^1 z\left[\der_zA_i(\bar{\Phi}+\tilde z, \nabla\bar{\psi}+\tilde{\bf q})\right]_{(\tilde z, \tilde{\bf q})=(0, {\bf 0})}^{(tz, {\bf q})}+q_j[\der_{q_j}A_i(\bar{\Phi}, \nabla\bar{\psi}+\tilde{\bf q})]_{\tilde{\bf q}={\bf 0}}^{t{\bf q}}\,dt,\\
\label{definition:f-s}
f(x_1, z, {\bf q}):&=\int_0^1 z\left[\der_z B(\bar{\Phi}+\tilde z, \nabla\bar{\psi}+\tilde{\bf q})\right]_{(\tilde z, \tilde{\bf q})=(0, {\bf 0})}^{(tz, {\bf q})}+q_j[\der_{q_j}B(\bar{\Phi}, \nabla\bar{\psi}+\tilde{\bf q})]_{\tilde{\bf q}={\bf 0}}^{t{\bf q}}\,dt,
\end{align}
where $[G(X)]_{X=a}^b:=G(b)-G(a)$. Here, each $\der_j$ denotes $\der_{x_j}$ for $j=1,2$.
\medskip

Differentiating the pseudo-Bernoulli law
\begin{equation}
\label{pseudo Bernoulli-s}
\frac12 \frac{|{\bf q}|^2}{\rho^2}+\frac{\gamma \rho^{\gamma-1}}{\gamma-1}-z=k_0
\end{equation}
with respect to ${\bf q}$ and $z$ for $(z,{\bf q})=(z,q_1,q_2)\in\R\times\R^2$ satisfying \eqref{nondegeneracy-s}, we obtain
\begin{equation}
\label{derivatives of rho-s}
\partial_{\bf q}\rho_{\rm sub}({\bf q},z)
=
\frac{-{\bf q}}{\gamma \rho_{\rm sub}^{\gamma}({\bf q}, z)-\frac{|{\bf q}|^2}{\rho_{\rm sub}({\bf q}, z)}},\quad \partial_{\bf z}\rho_{\rm sub}({\bf q},z)=\frac{1}{\gamma \rho_{\rm sub}^{\gamma-2}({\bf q}, z)-\frac{|{\bf q}|^2}{\rho_{\rm sub}^3({\bf q}, z)}}.
\end{equation}
Then, by \eqref{definition:A,B-s}, a direct computation yields
\begin{equation}
\label{coefficients-s}
\partial_{q_j}A_i(z,{\bf q})
=
\frac{1}{\rho_{\rm sub}}
\left(
\delta_{ij}
+\frac{q_iq_j}{\gamma\rho_{\rm sub}^{\gamma+1}-|{\bf q}|^2}
\right),
\end{equation}
where $\rho_{\rm sub}=\rho_{\rm sub}({\bf q}, z)$.

In particular, for the background solution $(\bar{\Phi},\bar{\psi})$, this identity reduces to
\[
\bar{\rho}\,\partial_{q_j}A_i(\bar{\Phi},\nabla\bar{\psi})=
\begin{cases}
1, & \text{if } i=j=1,\\[2mm]
\dfrac{\gamma\bar{\rho}^{\gamma-1}}{\gamma\bar{\rho}^{\gamma-1}-\bar u_1^2}, & \text{if } i=j=2,\\[2mm]
0, & \text{if } i\neq j.
\end{cases}
\]

By \eqref{subsonicity-s}, there exist positive constants $\lambda_0$ and $\Lambda_0$, depending only on the background solution, such that
\begin{equation}
\label{ellipticity-s}
\lambda_0 \mathbb{I}_2
\le
\bigl[\partial_{q_j}A_i(\bar{\Phi},\nabla\bar{\psi})\bigr]_{i,j=1}^2
\le
\Lambda_0 \mathbb{I}_2
\quad \text{in } \ol{\mathcal{N}_L}.
\end{equation}
Consequently, the bilinear operator $L_1(\cdot,\cdot)$ defined in \eqref{definition:L-s} is uniformly elliptic in $\mathcal{N}_L$ with respect to its first argument.

Furthermore, it follows directly from \eqref{definition:A,B-s} and \eqref{derivatives of rho-s} that
\begin{equation}
\label{essential obser-s}
    \partial_z{\bf A}(z,{\bf q})+\partial_{\bf q}B(z,{\bf q})=0
\end{equation}
for $(z,{\bf q})=(z,q_1,q_2)\in\R\times\R^2$ satisfying \eqref{nondegeneracy-s}. 

Since the identities \eqref{ellipticity-s} and \eqref{essential obser-s} correspond, respectively, to \eqref{ellipticity-p} and \eqref{essential observation-p} in the Euler–Poisson system with self-consistent electric fields, Proposition~\ref{proposition-2} can be proved by following the argument in \cite{bae2015two} and the proof of Proposition~\ref{proposition-1}.
We omit the details. Alternatively, we refer to \cite{bae2015two} for further details.

\subsection{The uniqueness}
\label{subsection:3.2}

To establish part (b) of Theorem~\ref{theorem-2}, we introduce the $\lambda$-set under the assumption \eqref{assumption1-thm2} and show that this set is convex for any $\lambda>0$, provided that $\gamma \ge 3$.
\begin{prop}[Convexity of the {\it{$\lambda$-set}}  ]
\label{proposition-convexity-s}
For $\gamma>1$ and $\lambda\in\R$, define
\[
\mathcal{S}_{\lambda}(\gamma)
:=\left\{(z,{\bf q})\in \R\times \R^2 \middle \vert\;
k_0+z-\mathfrak{B}(\rho_s({\bf q}), {\bf q}) \ge \lambda
\right\}
\]
where $\mathfrak{B}(\rho,{\bf q})$ and $\rho_s({\bf q})$ are given by \eqref{definition-B-s} and \eqref{definition-rhos-s}, respectively.
Then the set $\mathcal{S}_{\lambda}(\gamma)$ is convex for any $\lambda>0$ and $\gamma\ge 3$.

\end{prop}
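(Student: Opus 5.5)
The plan is to compute the function $\mathfrak{B}(\rho_s({\bf q}),{\bf q})$ in closed form. Once that is done, $\mathcal{S}_{\lambda}(\gamma)$ turns out to be a superlevel set of a concave function of $(z,{\bf q})$. Unlike Proposition~\ref{proposition-convexity-p}, no path argument along segments is needed here: the defining inequality does not involve $\rho_{\rm sub}$ at all, only the explicit critical density $\rho_s({\bf q})$ from \eqref{definition-rhos-s}.

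\emph{Step 1 (explicit formula).} By \eqref{definition-rhos-s} we have $\gamma\rho_s^{\gamma+1}=|{\bf q}|^2$. Hence
\[
\frac{|{\bf q}|^2}{2\rho_s^2}=\frac{\gamma}{2}\rho_s^{\gamma-1},
\]
and substituting into \eqref{definition-B-s} gives
\[
\mathfrak{B}(\rho_s({\bf q}),{\bf q})=\gamma\rho_s^{\gamma-1}\Bigl(\frac12+\frac1{\gamma-1}\Bigr)=\frac{\gamma(\gamma+1)}{2(\gamma-1)}\,\gamma^{-\frac{\gamma-1}{\gamma+1}}\,|{\bf q}|^{\beta},
\qquad \beta:=\frac{2(\gamma-1)}{\gamma+1}.
\]
Write this as $C_\gamma|{\bf q}|^{\beta}$ with $C_\gamma>0$, valid for all ${\bf q}\in\R^2$ (including ${\bf q}={\bf 0}$, where both sides vanish since $\rho_s({\bf 0})=0$ and $\gamma>1$). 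Therefore
\[
\mathcal{S}_{\lambda}(\gamma)=\{(z,{\bf q}) : z\ge \lambda-k_0+C_\gamma|{\bf q}|^{\beta}\},
\]
which is the epigraph of ${\bf q}\mapsto \lambda-k_0+C_\gamma|{\bf q}|^{\beta}$.

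\emph{Step 2 (convexity).} An epigraph is convex exactly when the underlying function is convex, so it suffices to show that ${\bf q}\mapsto|{\bf q}|^{\beta}$ is convex. The condition $\gamma\ge3$ is equivalent to $\beta\ge1$, since $2(\gamma-1)\ge\gamma+1$ iff $\gamma\ge3$. For $\beta\ge1$, the map $s\mapsto s^{\beta}$ is convex and nondecreasing on $[0,\infty)$, and ${\bf q}\mapsto|{\bf q}|$ is convex. A nondecreasing convex function composed with a convex function is convex, so $|{\bf q}|^\beta$ is convex and the set is convex. Concretely, for $(z_j,{\bf q}_j)\in\mathcal{S}_\lambda(\gamma)$, $j=0,1$, and $t\in[0,1]$,
\[
C_\gamma|{\bf q}_t|^\beta\le C_\gamma\bigl((1-t)|{\bf q}_0|+t|{\bf q}_1|\bigr)^\beta\le (1-t)C_\gamma|{\bf q}_0|^\beta+tC_\gamma|{\bf q}_1|^\beta\le z_t-\lambda+k_0 .
\]
This argument actually works for every $\lambda\in\R$; the restriction $\lambda>0$ is only what is needed later, to stay strictly inside the nondegenerate region \eqref{nondegeneracy-s}.

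\emph{Main obstacle.} The only delicate point is the algebra in Step~1, in particular getting the exponent $\beta$ right, because it is exactly $\beta\ge1$ that produces the threshold $\gamma\ge3$. I would also remark that for $1<\gamma<3$ we get $\beta<1$, so $|{\bf q}|^{\beta}$ is not convex (it is concave along rays). The set then genuinely fails to be convex, which explains why Theorem~\ref{theorem-2}(b) requires $\gamma\ge3$.
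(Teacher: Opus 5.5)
Your proof is correct and follows essentially the paper's route: both compute $\mathfrak{B}(\rho_s({\bf q}),{\bf q})=C(\gamma)|{\bf q}|^{2(\gamma-1)/(\gamma+1)}$ in closed form and then use the convexity of $|{\bf q}|^{\beta}$ for $\beta\ge 1$, i.e.\ $\gamma\ge 3$. The paper phrases this as concavity of $t\mapsto k_0+z_t-C(\gamma)|{\bf q}_t|^{\beta}$ along segments, while you phrase it as convexity of an epigraph; your side remarks are also correct, namely that the argument works for every $\lambda\in\R$ and that the set is genuinely non-convex when $1<\gamma<3$.
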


\begin{proof}
If $\mathcal{S}_{\lambda}(\gamma)=\emptyset$, there is nothing to prove. 
Assume that $(z_j,{\bf q}_j)\in \mathcal{S}_{\delta}(\gamma)$ for $j=0,1$, and define
\[
(z_t,{\bf q}_t):=(1-t)(z_0,{\bf q}_0)+t(z_1,{\bf q}_1), \qquad t\in[0,1].
\]

The straightforward computations using \eqref{definition-B-s} and \eqref{definition-rhos-s} yield
\begin{equation}
\label{sonic representation-s}
\mathfrak{B}(\rho_s({\bf q}),{\bf q})
=\left(\frac12\left(\frac1\gamma\right)^{-\frac{2}{\gamma+1}}
+\frac{\gamma}{\gamma-1}\left(\frac1\gamma\right)^{\frac{\gamma-1}{\gamma+1}}\right)
|{\bf q}|^{\frac{2(\gamma-1)}{\gamma+1}}
=:C(\gamma)|{\bf q}|^{\frac{2(\gamma-1)}{\gamma+1}}.
\end{equation}
For $t\in[0,1]$, define
\begin{align*}
\mathfrak{g}(t)
&:=k_0+z_t-\mathfrak{B}(\rho_s({\bf q}_t),{\bf q}_t) \\
&=k_0+(1-t)z_0+tz_1
-C(\gamma)\bigl|{\bf q}_0+t({\bf q}_1-{\bf q}_0)\bigr|^{\frac{2(\gamma-1)}{\gamma+1}},
\end{align*}
and 
\[
\mathfrak{f}(t)
:=\bigl|{\bf q}_0+t({\bf q}_1-{\bf q}_0)\bigr|^{\frac{2(\gamma-1)}{\gamma+1}}.
\]
Since $(z_j,{\bf q}_j)\in \mathcal{S}_{\lambda}(\gamma)$ for $j=0,1$, it follows that
\[
\min\{\mathfrak{g}(0),\mathfrak{g}(1)\}\ge \lambda.
\]
If
\begin{equation}
\label{condition-gamma}
\frac{2(\gamma-1)}{\gamma+1}\ge 1\left(\Leftrightarrow \gamma\ge 3\right),
\end{equation}
then the function $\mathfrak{f}$ is convex on $[0,1]$. Consequently, $\mathfrak{g}$ is concave on $[0,1]$, and hence
\[
\mathfrak{g}(t)\ge (1-t)\mathfrak{g}(0)+t\mathfrak{g}(1)\ge \delta
\qquad \forall\, t\in[0,1].
\]
Therefore, $(z_t,{\bf q}_t)\in \mathcal{S}_{\lambda}(\gamma)$ for all $t\in[0,1]$ provided that \eqref{condition-gamma} holds. This completes the proof of the proposition.
\end{proof}

\begin{lemma}[Non-vacuum property]
\label{lemma-nonvacuum}
Suppose that $(z,{\bf q})\in \mathcal{S}_{\lambda}(\gamma)$ 
for some $\gamma\in[3,\infty)$ and $\lambda>0$. 
Then there exists a unique constant $\rho^{(z,{\bf q})}$ such that
\begin{equation*}
    \begin{aligned}
    &\rho^{(z,{\bf q})}>\rho_s({\bf q})\quad 
    \text{and}\quad 
    \mathfrak{B}(\rho^{(z,{\bf q})}, {\bf q})=k_0+z,
    \end{aligned}
\end{equation*}
where $\mathfrak{B}(\rho, {\bf q})$ is defined in \eqref{definition-B-s}.
\end{lemma}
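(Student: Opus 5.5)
The plan is to view the lemma as a one-variable statement about the map $\rho\mapsto \mathfrak{B}(\rho,{\bf q})$ on the half-line $(\rho_s({\bf q}),\infty)$. There we use strict monotonicity together with the intermediate value theorem. Fix $(z,{\bf q})\in\mathcal{S}_{\lambda}(\gamma)$ with $\lambda>0$ and $\gamma\ge 3$ (in particular $\gamma>1$). By the definition of $\mathcal{S}_{\lambda}(\gamma)$,
\[
k_0+z\ge \mathfrak{B}(\rho_s({\bf q}),{\bf q})+\lambda>\mathfrak{B}(\rho_s({\bf q}),{\bf q}).
\]
The target value $k_0+z$ therefore lies strictly above the value of $\mathfrak{B}$ at the sonic density.

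\emph{Existence.} Consider first the case ${\bf q}\neq{\bf 0}$. Then $\rho_s({\bf q})>0$. The function $\rho\mapsto\mathfrak{B}(\rho,{\bf q})$ is continuous on $[\rho_s({\bf q}),\infty)$ and, by \eqref{definition-rhos-s}, strictly increasing there. Moreover $\mathfrak{B}(\rho,{\bf q})\to\infty$ as $\rho\to\infty$, because $\gamma>1$ makes the term $\frac{\gamma\rho^{\gamma-1}}{\gamma-1}$ blow up. The intermediate value theorem then gives some $\rho^{(z,{\bf q})}>\rho_s({\bf q})$ with $\mathfrak{B}(\rho^{(z,{\bf q})},{\bf q})=k_0+z$. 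The inequality $\rho^{(z,{\bf q})}>\rho_s({\bf q})$ is strict because $\mathfrak{B}$ at $\rho_s$ is strictly below $k_0+z$.

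Now the degenerate case ${\bf q}={\bf 0}$. Here $\rho_s({\bf 0})=0$ and $\mathfrak{B}(\rho,{\bf 0})=\frac{\gamma\rho^{\gamma-1}}{\gamma-1}$. This is a continuous, strictly increasing bijection from $(0,\infty)$ onto $(0,\infty)$, since the factor $\frac{\gamma}{\gamma-1}$ is positive. We read $\mathfrak{B}(\rho_s({\bf 0}),{\bf 0})$ as $0$, which matches the formula \eqref{sonic representation-s} with $C(\gamma)|{\bf q}|^{\frac{2(\gamma-1)}{\gamma+1}}=0$. The hypothesis then gives $k_0+z\ge\lambda>0$, so there is a unique $\rho^{(z,{\bf 0})}>0=\rho_s({\bf 0})$ solving the equation.

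\emph{Uniqueness.} This follows from strict monotonicity of $\mathfrak{B}(\cdot,{\bf q})$ on $(\rho_s({\bf q}),\infty)$, which holds in both cases. Two distinct roots larger than $\rho_s({\bf q})$ would give distinct values of $\mathfrak{B}$, a contradiction. For ${\bf q}\neq{\bf 0}$, uniqueness can also be checked directly from the derivative
\[
\partial_\rho\mathfrak{B}=\rho^{-3}\bigl(\gamma\rho^{\gamma+1}-|{\bf q}|^2\bigr),
\]
which is positive exactly when $\rho>\rho_s({\bf q})$.

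\emph{Main obstacle.} The only delicate point is the case ${\bf q}={\bf 0}$. There $\rho_s({\bf 0})=0$ and $\mathfrak{B}(\rho_s,{\bf q})$ has to be interpreted by continuity. This is also where positivity of $\lambda$ is genuinely needed: it ensures $k_0+z>0$, so that a positive density exists and the solution is non-vacuum. Apart from that, the argument is a monotone intermediate value argument. The restriction $\gamma\ge3$ plays no role beyond $\gamma>1$.
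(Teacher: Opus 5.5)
Your proof is correct and follows the paper's argument. You use the same split: for ${\bf q}={\bf 0}$, $\mathfrak{B}(\rho,{\bf 0})=\frac{\gamma\rho^{\gamma-1}}{\gamma-1}$ and $k_0+z\ge\lambda>0$; for ${\bf q}\neq{\bf 0}$, $\mathfrak{B}(\cdot,{\bf q})$ is strictly increasing on $(\rho_s({\bf q}),\infty)$ and tends to $\infty$, so the intermediate value theorem applies. The only difference is that you write out the ${\bf q}\neq{\bf 0}$ case explicitly, whereas the paper defers it to the discussion at the beginning of Section~\ref{section:3}.
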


\begin{proof}
\textbf{Case 1: $|{\bf q}|=0$.}
In this case, it follows from \eqref{definition-B-s} that
\[
\mathfrak{B}(\rho, {\bf q})
= \frac{\gamma \rho^{\gamma-1}}{\gamma-1},
\]
and from \eqref{sonic representation-s} that
$\rho_s({\bf q})=0$,
so that
$\mathfrak{B}(\rho_s({\bf q}), {\bf q})=0$.
Since $(z,{\bf q})\in \mathcal{S}_{\lambda}(\gamma)$ and $\lambda>0$, we have
$k_0+z>\lambda>0$.
For any $\gamma>1$, the equation
\[
\frac{\gamma \rho^{\gamma-1}}{\gamma-1}
= k_0+z
\]
admits a unique positive solution $\rho=\rho^{(z,{\bf q})}$.
Clearly,
\[
\rho^{(z,{\bf q})}>\rho_s({\bf q})=0.
\]

\medskip
\textbf{Case 2: $|{\bf q}|>0$.}
In this case, the desired conclusion follows from the argument established at the beginning of Section~\ref{section:3}. 
This completes the proof.
\end{proof}

\begin{proof}[Proof for part~(b) of Theorem \ref{theorem-2}]
We will essentially follow the proof of part~(b) of Theorem~\ref{theorem-1} in \S \ref{subsection:uniqueness 1}. In this proof, we assume $\gamma\ge 3.$
The rest of the proof is divided into three steps.

{\it Step 1.}
For $j=0,1$, let
\[
(\rho^{(j)}, {\bf u}^{(j)}, \Phi^{(j)})\in
\bigl[C^0(\ol{\mathcal{N}_L})\cap C^1(\mathcal{N}_L)\bigr]
\times
\bigl[C^0(\ol{\mathcal{N}_L};\R^2)\cap C^1(\mathcal{N}_L;\R^2)\bigr]
\times
\bigl[C^1(\ol{\mathcal{N}_L})\cap C^2(\mathcal{N}_L)\bigr]
\]
be two subsonic solutions of Problem~\ref{main problem} in $\ol{\mathcal{N}_L}$. And, define
\begin{equation*}
\psi^{(j)}(x_1, x_2):=\int_0^{x_2}\left(\rho^{(j)}
{\bf u}^{(j)}\cdot{\bf e}_1\right)(x_1, t)\,dt\quad\text{in $\ol{\mathcal{N}_L}$}.
\end{equation*}
Then we have
\begin{equation}
\label{bvp for uniqueness-s}
\begin{aligned}
    \begin{cases}
    \nabla\cdot ({\bf A}(\Phi^{(1)}, \nabla\psi^{(1)})-{\bf A}(\Phi^{(0)}, \nabla\psi^{(0)}))=0\\
    \Delta (\Phi^{(1)}-\Phi^{(0)})=-w(x_1)(B(\Phi^{(1)}, \nabla\psi^{(1)})-B(\Phi^{(0)}, \nabla\psi^{(0)}))
    \end{cases}\quad& \text{in $\mathcal{N}_L$},\\
    \psi^{(1)}-\psi^{(0)}=0,\quad \Phi^{(1)}-\Phi^{(0)}=0\quad &\text{on $\Gamma_0$},\\
    \psi^{(1)}-\psi^{(0)}=0,\quad \nabla(\Phi^{(1)}-\Phi^{(0)})\cdot {\bf n}_w=0\quad &\text{on $\Gamma_w$},\\
    \nabla(\psi^{(1)}-\psi^{(0)})\cdot {\bf n}_L=0,\quad \nabla(\Phi^{(1)}-\Phi^{(0)})\cdot {\bf n}_L=0\quad &\text{on $\Gamma_L$}
\end{aligned}
\end{equation}
where ${\bf A}(z,{\bf q})$ and $B(z, {\bf q})$ are given in \eqref{definition:A,B-s}.

Define
\begin{align*}
(\Phi^t, \psi^t) &:= (1-t)(\Phi^{(0)}, \psi^{(0)}) + t(\Phi^{(1)}, \psi^{(1)}), \\
\lambda_t &:= \min_{\overline{\mathcal{N}_L}} 
\left(k_0 + \Phi^t - \mathfrak{B}(\rho_s(\nabla \psi^t), \nabla \psi^t)\right).
\end{align*}

Since each $(\rho^{(j)}, \mathbf{u}^{(j)}, \Phi^{(j)})$ is subsonic in 
$\overline{\mathcal{N}_L}$, it follows that $\lambda_j > 0$ for $j=0,1$. 
Hence one has
\[
\lambda_* := \min\{\lambda_0, \lambda_1\}>0.
\]
By Proposition~\ref{proposition-convexity-s}, we then have
\begin{equation}
\label{pre ellipticity-s}
\lambda_t \ge \lambda_* > 0
\quad \text{for all } t \in [0,1].
\end{equation}

{\it Step 2.} It follows from Lemma~\ref{lemma-nonvacuum} and \eqref{pre ellipticity-s} 
that, for each $t \in [0,1]$, 
$\rho_{\rm sub}(\Phi^t, \nabla \psi^t)$ 
is well defined throughout $\overline{\mathcal{N}_L}$. 
Since $\rho_{\rm sub}(z,{\bf q})$ is continuous in $\mathcal{S}_{\lambda_*}(\gamma)$, 
and since the set 
\[
\mathcal{D}=\{(\Phi^t(\mathbf{x}), \nabla \psi^{t}(\mathbf{x})) : \mathbf{x} \in \overline{\mathcal{N}_L},\,\,t\in[0,1]\}
\]
is compact, it follows that
\begin{equation}
\label{uniform nonvacuum-s}
\varrho_* 
:= \min_{t\in[0,1], \atop \rx\in\overline{\mathcal{N}_L}} 
\rho_{\rm sub}(\Phi^t(\rx), \nabla \psi^t(\rx) ) 
> 0.
\end{equation}
In addition, it follows from $\rho_{\rm sub}(z,{\bf q})-\rho_s({\bf q})>0$ in $S_{\lambda_*}(\gamma)$ and the compactness of the set $\mathcal{D}$, we also obtain
\begin{equation}
\label{uniform ellipticity final-s}
\begin{split}
\kappa_*
&:= \min_{t\in[0,1], \atop \rx\in\overline{\mathcal{N}_L}}
\gamma\left(\rho_{\rm sub}^{\gamma+1}(\Phi^t(\rx), \nabla \psi^t(\rx)) -\rho_s^{\gamma+1}(\nabla\psi^t (\rx))\right)\\
&= \min_{t\in[0,1], \atop \rx\in\overline{\mathcal{N}_L}} \left(
\gamma\rho_{\rm sub}^{\gamma+1}(\Phi^t(\rx)) \nabla \psi^t(\rx)) -|\nabla\psi^t(\rx)|^2\right)
> 0.
\end{split}
\end{equation}

The straightforward computations using \eqref{definition:A,B-s} and \eqref{derivatives of rho-s} yield
\begin{align*}
\der_{\bf q}{\bf A}(z,{\bf q})&= \frac{1}{\rho_{\rm sub}(z,{\bf q})} \begin{bmatrix}
\delta_{ij}+\frac{q_iq_j}{\gamma\rho_{\rm sub}^{\gamma+1}(z,{\bf q})-|{\bf q}|^2}
\end{bmatrix}_{i,j\in\{1,2\}},\\
\der_z\rho_{\rm sub}(z,{\bf q})&=\frac{-\rho_{\rm sub}(z,{\bf q})}{\gamma\rho_{\rm sub}^{\gam-1}(z,{\bf q})-\frac{|{\bf q}|^2}{\rho_{\rm sub}^2(z,{\bf q})}}
\end{align*}
for $(z,{\bf q})\in \mathcal{S}_{\lambda_*}(\gamma)$.
It follows from \eqref{uniform nonvacuum-s} and 
\eqref{uniform ellipticity final-s} that there exists a constant 
$\mu_* > 0$, depending only on $\varrho_*$ and $\kappa_*$, such that
\begin{equation}
\label{essentials in estimate-s}
\mu_* \mathbb{I}_2 
\le \partial_{\mathbf q}\mathbf A(\Phi^t, \nabla \psi^t) 
\le \frac{1}{\mu_*}\mathbb{I}_2,
\qquad
\partial_z \rho_{\rm sub}(\Phi^t, \nabla \psi^t) 
\le -\mu_*
\quad \text{in } \overline{\mathcal{N}_L},
\end{equation}
for all $t \in [0,1]$.
\medskip

{\it Step 3.} Put
\begin{equation*}
\phi:=\psi_1-\psi_0,\quad \Psi:=\Phi_1-\Phi_0.
\end{equation*}
And we rewrite \eqref{bvp for uniqueness-s} as
\begin{equation*}
\begin{aligned}
 \begin{cases}
E_1:=    \nabla\cdot \left(\int_0^1\der_i\phi \der_{q_i}{\bf A}_j(\Phi^t, \nabla\psi^t)+ \Psi \der_z {\bf A}_j(\Phi^t, \nabla\psi^t)\,dt\right)=0\\
E_2:=    -\frac{1}{w(x_1)}\Delta \Psi- (\Psi, \nabla\phi)\cdot \int_0^1(\der_z,\der_{\bf q})B(\Phi^t, \nabla\psi^t)\,dt=0\end{cases}\quad &\text{in $\mathcal{N}_L$,}\\
   \phi=0,\quad \Psi=0\quad &\text{on $\Gamma_0$},\\
    \phi=0,\quad \nabla\Psi\cdot {\bf n}_w=0\quad &\text{on $\Gamma_w$},\\
    \nabla\phi\cdot {\bf n}_L=0,\quad \nabla\Psi\cdot {\bf n}_L=0\quad &\text{on $\Gamma_L$}.   
    \end{aligned}
\end{equation*}
Integrating by parts yields
\begin{equation*}
\begin{split}
0&=\int_{\mathcal{N}_L} E_1\phi+E_2\Psi\,d\rx\\
&=\int_{\Gamma_L} \phi \int_0^1\der_i\phi\der_{q_i}{\bf A}_1(\Phi^t, \nabla\psi^t)+\Psi\der_z{\bf A}_1(\Phi^t, \nabla\psi^t)\,dt\,dx_2\\
&\phantom{=}-\int_{\mathcal{N}_L} \der_i\phi\der_j\phi \int_0^1 \der_{q_i}{\bf A}_j(\Phi^t, \nabla\psi^t)\,dt\,d\rx-\int_{\mathcal{N}_L} \Psi\der_j\phi \int_0^1\der_z{\bf A}_j(\Phi^t, \nabla\psi^t)\,dt\,d\rx\\
&\phantom{=}-\int_{\mathcal{N}_L} \nabla\Psi\cdot \nabla\left(\frac{-1}{w(x_1)} \Psi\right)\,d\rx\\
&\phantom{=}-\int_{\mathcal{N}_L} \Psi^2\int_0^1 \der_z B(\Phi^t, \nabla\psi^t)\,dt\,d\rx-\int_{\mathcal{N}_L} \Psi \nabla\phi\cdot \int_0^1\der_{\bf q}B(\Phi^t, \nabla\psi^t)\,dt\,d\rx
\end{split}
\end{equation*}

From
\begin{equation*}
\begin{aligned}
\der_1\phi=\nabla\phi\cdot {\bf n}_L=0\quad&\mbox{on $\Gamma_L$,}\\
\der_2{\bf A}_2(\Phi^t, \nabla\psi^t)
=\frac{\der_1\psi^t\der_2\psi^t}
{\rho_{\rm sub}({\bf P}^t)\left(\gamma\rho_{\rm sub}^{\gamma+1}({\bf P}^t)-|\nabla\psi^t|^2\right)}
=0\quad&\mbox{on $\Gamma_L$,}
\end{aligned}
\end{equation*}
for ${\bf P}^t:=(\Phi^t, \nabla\psi^t)$, it follows that
\[
\int_{\Gamma_L} \phi \int_0^1 \der_i\phi\,\der_i{\bf A}_1(\Phi^t, \nabla\psi^t)\,dt\,dx_2=0.
\]
Furthermore, by \eqref{derivatives of rho-s} and \eqref{essential obser-s}, we obtain
\[
\der_z{\bf A}_1(\Phi^t, \nabla\psi^t)
={\bf e}_1\cdot\der_{\bf q}\rho_{\rm sub}({\bf P}^t)
=\frac{\der_1\psi^t}
{\gamma\rho_{\rm sub}^{\gamma}({\bf P}^t)-\frac{|\nabla\psi^t|^2}{\rho_{\rm sub}({\bf P}^t)}}
=0
\quad\text{on $\Gamma_L$.}
\]
Therefore,
\begin{equation}
\label{integral 1-s}
\int_{\Gamma_L} \phi \int_0^1 \Psi\,\der_z{\bf A}_1(\Phi^t, \nabla\psi^t)\,dt\,dx_2=0.
\end{equation}
Finally, using \eqref{derivatives of rho-s} and \eqref{essential obser-s} again yields
\begin{equation}
\label{integral 2-s}
\int_{\mathcal{N}_L} \Psi\,\der_j\phi
\int_0^1 \der_z{\bf A}_j(\Phi^t, \nabla\psi^t)\,dt\,d\rx
+
\int_{\mathcal{N}_L} \Psi\, \nabla\phi\cdot
\int_0^1 \der_{\bf q}B(\Phi^t, \nabla\psi^t)\,dt\,d\rx
=0.
\end{equation}

It follows from \eqref{essentials in estimate-s} that one has 
\begin{equation}
\label{integral 3-s}
\begin{aligned}
&\int_{\mathcal{N}_L} \der_i\phi\der_j\phi \int_0^1 \der_{q_i}{\bf A}_j(\Phi^t, \nabla\psi^t)\,dt\,d\rx+\int_{\mathcal{N}_L} \Psi^2\int_0^1 \der_z B(\Phi^t, \nabla\psi^t)\,dt\,d\rx\\
\ge & \mu_*\int_{\mathcal{N}_L}|\nabla\phi|^2+\Psi^2\,d\rx.
\end{aligned}
\end{equation}

Integration by parts together with \eqref{estimate of w-s} gives
\begin{equation}
    \label{integral 4-s}
    \int_{\mathcal{N}_L}\nabla \Psi\cdot \nabla\left(-\frac{1}{w(x_1)}\Psi\right)\,d\rx
    \ge 
    \int_{\mathcal{N}_L}\frac{1}{\tilde{\mu}_1}|\nabla\Psi|^2
    -\frac{\|w'\|_{C^0[0,L]}}{\tilde{\mu}_1+L\|w'\|_{C^0[0,L]}}|\Psi\der_1\Psi|\,d\rx .
\end{equation}
Combining \eqref{integral 1-s}--\eqref{integral 4-s}, we obtain
\begin{equation*}
\begin{aligned}
0&=\int_{\mathcal{N}_L} E_1\phi+E_2\Psi\,d\rx\\
&\le -\int_{\mathcal{N}_L}\mu_*
\left(|\nabla\phi|^2+\Psi^2\right)+\frac{1}{\tilde{\mu}_1}|\nabla\Psi|^2\,d\rx
-\frac{\|w'\|_{C^0[0,L]}}{\tilde{\mu}_1+L\|w'\|_{C^0[0,L]}}
\int_{\mathcal{N}_L} |\Psi\der_1\Psi|\,d\rx .
\end{aligned}
\end{equation*}
With the aid of the Cauchy--Schwarz inequality, if $\|w'\|_{C^0[0,L]}$ is sufficiently small depending only on $(\mu_*, \tilde{\mu}_1, L)$, then we have
\[
\int_{\mathcal{N}_L}\mu_*
\left(|\nabla\phi|^2+\Psi^2\right)
+\frac{1}{\tilde{\mu}_1}|\nabla\Psi|^2\,d\rx=0.
\]
Hence,
$\phi=\Psi=0$ in $\ol{\mathcal{N}_L}$.
This completes the proof of Theorem \ref{theorem-2}.
\end{proof}

\medskip
\noindent
{\bf Acknowledgments:}
The research of Myoungjean Bae was supported in part by the Ministry of Education of the Republic of Korea and the National Research Foundation of Korea (NRF-RS-2025-00553734). The research of Ben Duan was supported in part by National Key R\&D Program of China No. 2024YFA1013303, NSFC Grant No. 12271205. And, the research of Chunjing Xie was partially supported by NSFC grants 12571238 and 12426203. The third author thanks helpful discussions with Professor David Ruiz.

\bigskip
\bibliographystyle{plain}
\bibliography{References_EP_smooth_tr_new}

\end{document}